\newtheorem{theorem}{Theorem}
\newtheorem{corollary}{Corollary}[section]
\newtheorem{lemma}[corollary]{Lemma}
\newtheorem{proposition}[corollary]{Proposition}
\newcommand{\Prob} {{\mathbb P}}
\newcommand{\E}{{\mathbb E}}
\newcommand{\R}{{\mathbb{R}}}
\newcommand{\C}{{\mathbb C}}
\newcommand{\x}{{\bf x}}
\newcommand{\y}{{\bf y}}
\newcommand\N{{\mathbb N}}
\newcommand {{\ball}} {{\mathcal B}}
\renewcommand \Im {{\rm Im}}
\renewcommand \Re {{\rm Re}}
\newcommand \p {\partial}
\newcommand \eset {\emptyset}
\newcommand \paths {{\cal K}}
\renewcommand \loop {{\cal L}}
\newcommand \soup {{\cal C}}
\newcommand \ls {{\soup}}
\newcommand \loops {\loop}
\newcommand \cm   {{\hat m}}
\newcommand{\sequences} {{\mathcal S}}
 \newcommand {\uloop} { {\tilde \omega}}
 \newcommand {\umeas}  {\tilde m}
 \newcommand {\vf} {\phi}
 \newcommand {\trivial}{{\mathcal T}}
\newcommand {\laplace}  {{\Delta}}
\newcommand\w{\omega}
\newcommand \lmeas {{\mathcal M}}
\newcommand   \loopset {{\mathcal O}}
\begin{document}

\title{Loop measures and the Gaussian free field}

\author{Gregory F. Lawler \thanks{Research supported by National
Science Foundation grant DMS-0907143.}\\
University of Chicago  \\ \\ \\
Jacob Perlman\\ University of Chicago}

\maketitle

\begin{abstract}
Loop measures and their associated loop soups are generally viewed as arising from finite state Markov chains. We generalize several results to loop measures arising from potentially complex edge weights.   We discuss two applications:  Wilson's algorithm to produce uniform spanning trees and 
 an  isomorphism theorem due to Le Jan.  

\end{abstract}

\section{Introduction}
Loop measures have become important in the analysis of random walks and fields arising from random walks.
Such measures appear in work of Symanzik \cite{Sym}
but the
recent revival came from the Brownian loop soup \cite{LWerner}
which arose in the study of the Schramm-Loewner evolution.  The
random walk loop soup is a discrete analogue for which one
can show convergence to the Brownian loop soup.  The study of such measures and soups has continued: in continuous time by Le Jan \cite{LeJan}
 and in discrete time in \cite{LJose,
 LLimic}.   The purpose of this note is to
give an introduction to the discrete time measures and to 
discuss two of the applications: 
the relation with loop-erased walk and spanning
trees,   and a distributional identity between a
function of the loop soup and the square of
the Gaussian free field.  
 This paper is not intended to be a survey but only
 a sample of the uses of the loop measure.

While the term ``loop measure'' may seem vague, we are
talking about a specific measure from which a probabilistic
construction, the ``loop soup'' is derived.
We are emphasizing the
 loop measure  rather than the loop soup which is a Poissonian realization of the measure because we want to allow
 the loop measure to take negative or complex values.
  However, we do consider the loop soup as a complex
  measure.

We will start with some basic definitions.  In many ways, the loop measure can be considered a way to understand matrices, especially determinants,  and some of the results have very
classical counterparts.  Most of the theorems about the basic properties can be found in \cite[chapter 9]{LLimic}
  although that book restricted itself to positive measures.  We redo some proofs just to show that positivity of the entries is not important.  A key fact is
that   the total mass of the  loop measure is
the negative of the logarithm of the  determinant
of the    Laplacian. 

We next introduce the loop-erased random walk and show how one can use loop measures to give a short proof of Kirchhoff matrix-tree theorem by using an algorithm due to David
Wilson for generating uniform spanning trees. 
 
Our next section describes an isomorphism theorem found by  Le Jan that is related to earlier isomorphism theorems of Brydges, et. al. and Dynkin.  In this case, one shows that the local time of a continuous time version of the loop soup has the same
distribution as the square of a Gaussian field.  Le Jan established this by constructing a continuous-time loop soup.  We choose a slightly different, but essentially equivalent, method of using
the discrete loop soup and then adding exponential waiting times.  This is similar to the construction of continuous time Markov chains by starting with a discrete time chain and then adding the waiting times.  In order to get the formulas to work, one needs
to consider a correction term that is given by ``trivial loops''.

We finally give some discussion of complex Gaussian fields
with positive definite Hermitian weights.    We first consider real (signed) weights and relate this to the real Gaussian free field.  Finally we consider a complex Gaussian field and show that it can be considered as a pair of real Gaussian fields.

\section{Definitions}

We will consider edge weights, perhaps complex
valued,  on a finite state space
$A$. A set of weights
is the same thing as a matrix  $Q$ indexed by $A$.

\begin{itemize}

\item We call $Q$ {\em acceptable} if the matrix with entries $|Q(x,y)|$ has all eigenvalues in the interior of the unit disc.  (This is not a standard term, but we will use it for convenience.)

\item  We say $Q$ is {\em positive} if the entries are nonnegative
and $Q$ is {\em real} if the entries are real.

\item  As usual, we say that $Q$ is {\em symmetric} if $Q(x,y)
 = Q(y,x)$ for all $x,y$ and $Q$ is {\em Hermitian} if $Q(x,y)
  = \overline{Q(y,x)}$ for all $x,y$.

\item If $Q$ is Hermitian we say that $Q$ is {\em  
positive
definite} if all the eigenvalues are strictly greater
than zero, or equivalently if $\overline \x \cdot Q
\x >0$ for all non-zero $\x$.

\end{itemize}

If $A \subsetneq A'$ and $Q$ is the transition matrix
for an irreducible Markov chain on $A'$, then $Q$ restricted
to $A$ is positive and acceptable.  This is one of the main
examples of interest. If $Q$ is any   matrix, then $\lambda Q$ is
 acceptable for $\lambda > 0$ sufficiently small.

  If 
$V \subset A$ with $k$ elements, we will write $Q_V$ for the 
$k \times k$ matrix obtained by restricting $Q$
to $V$.  A {\em path} in $A$ of length $n$ is a
finite sequence of points 
\[  \omega = [\omega_0,\ldots,\omega_n] , \;\;\;\;
 \omega_j \in A.\]
We write $|\omega| = n$ for the number of steps
in the path and $\omega^R$ for the reversed path
\[   \omega^R = [\omega_n,\ldots,\omega_0] . \]
We allow the trivial paths with $|\omega| = 0$.
We write $\paths_{x,y}(A)$ for the set of
 all paths in $A$ with $\omega_0 = x, \omega_n = y$; if $x = y$,
 we include the trivial path.  

The matrix $Q$ gives  the {\em path measure} defined
  by
\[   Q(\omega) = \prod_{j=1}^n Q(\omega_{j-1},
\omega_j), \;\;\;\;\omega  = [\omega_0,\ldots,\omega_n]\in \bigcup_{x,y \in A}
  \paths_{x,y}(A) , \]
  where $Q(\omega) = 1$ if $|\omega| = 0$. 
 Note that if $Q$ is Hermitian, then
  $Q(\omega^R) = \overline {Q(\omega)}$. 
 A path $\omega$ is a {\em (rooted) loop (rooted at $\omega_0$)} if
 $\omega_0 = \omega_n$.  
  Note that we write $Q$ both for the edge
 weights (matrix entries) and for the induced measure
 on paths.

 We let $\laplace   =
I - Q$ denote the Laplacian.  
We write $G(x,y) = G^Q(x,y)$ for the Green's function that
can be defined either as
\[   G  = \Delta^{-1} =  \sum_{j=0}^\infty Q^j\]
or by
\[   G(x,y) =  Q[\paths_{x,y}(A)] =
\sum_{\omega \in \paths_{x,y}(A)} Q(\omega). \]
Provided   $Q$ is acceptable,
 these sums
converge absolutely.
We write 
\[    G(x,y) = G_R(x,y) + i \, G_I(x,y),\]
where $G_R,G_I$ are real matrices.
   
   Let
\begin{equation}  \label{sept4.1}
  f_x = \sum Q(\omega) 
  \end{equation}
where the sum is over all paths $\omega$ from $x$ to $x$ of
length at least one that have no other visits to $x$.  
A standard renewal argument shows 
that 
\begin{equation}  \label{sept4.2}
G(x,x) = \sum_{k=0}^\infty f_{x}^k,
\end{equation}
and since the sum is convergent,\[
|f_x|  < 1.\]  
If $V \subset A$, we will write 
\[ G_V(x,y) =
G^{Q_V}(x,y)  = \sum_{\omega \in \paths_{x,y}(V)}
    Q(\omega) , \]
for the corresponding Green's function associated to
paths in $V$.  The next proposition is
a well known relation between the determinant of the
Laplacian and the Green's function.

\begin{proposition}  \label{prop1}
If $A = \{x_1,\ldots,x_n\}$ and $A_j = A\setminus \{x_1,\ldots,x_{j-1}\}$,
\[   \frac{1}{\det \laplace} =  \prod_{j=1}^n
    G_{A_j}(x_j,x_j) . \]
 \end{proposition}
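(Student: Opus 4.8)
The plan is to reduce the statement to a telescoping product of ratios of determinants of Laplacians on the nested sets $A_j$. Write $\laplace_{A_j} = I - Q_{A_j}$ for the restricted Laplacian. Since a principal submatrix of the nonnegative matrix $|Q|$ cannot have larger spectral radius than $|Q|$ itself, $Q_{A_j}$ is again acceptable and $\laplace_{A_j}$ is invertible, so that $G_{A_j} = \laplace_{A_j}^{-1}$, exactly as was already used when $G_V$ was introduced.

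The first and essentially only substantive step is to identify each diagonal Green's-function entry as a ratio of determinants. By Cramer's rule, the $(x_j,x_j)$ entry of $\laplace_{A_j}^{-1}$ is the $(x_j,x_j)$ cofactor of $\laplace_{A_j}$ divided by $\det \laplace_{A_j}$; since the sign $(-1)^{x_j+x_j}$ is $+1$, that cofactor is just the determinant of the matrix obtained from $\laplace_{A_j}$ by striking out the row and column indexed by $x_j$. Because $A_{j+1} = A_j \setminus\{x_j\}$, striking out row and column $x_j$ from $I - Q_{A_j}$ produces precisely $I - Q_{A_{j+1}} = \laplace_{A_{j+1}}$. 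Hence
\[ G_{A_j}(x_j,x_j) \;=\; \frac{\det \laplace_{A_{j+1}}}{\det \laplace_{A_j}}. \]

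It then remains only to multiply these $n$ identities together. The product telescopes,
\[ \prod_{j=1}^n G_{A_j}(x_j,x_j) \;=\; \prod_{j=1}^n \frac{\det \laplace_{A_{j+1}}}{\det \laplace_{A_j}} \;=\; \frac{\det \laplace_{A_{n+1}}}{\det \laplace_{A_1}}, \]
and since $A_1 = A$ while $A_{n+1} = \eset$, the numerator is the determinant of the empty matrix (namely $1$) and the denominator is $\det \laplace$, giving $\prod_{j=1}^n G_{A_j}(x_j,x_j) = 1/\det\laplace$.

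I do not anticipate a real obstacle here. The closest thing to a subtlety is confirming that deleting row and column $x_j$ from $\laplace_{A_j}$ genuinely reproduces $\laplace_{A_{j+1}}$ and that each $\laplace_{A_j}$ is invertible; neither is serious. If one prefers to avoid Cramer's rule, the same identity $G_{A_j}(x_j,x_j) = \det\laplace_{A_{j+1}}/\det\laplace_{A_j}$ can instead be obtained probabilistically from a last-exit decomposition of the loops in $\paths_{x_j,x_j}(A_j)$ at $x_j$ together with \eqref{sept4.1}--\eqref{sept4.2}, but that is a repackaging of the same linear algebra and the determinant route is shorter.
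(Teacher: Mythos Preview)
Your proof is correct and follows essentially the same approach as the paper: both obtain the key identity $G_{A_j}(x_j,x_j)=\det\laplace_{A_{j+1}}/\det\laplace_{A_j}$ via Cramer's rule, and then combine these over $j$. The paper phrases the combination as an induction on $n$ while you write it as a telescoping product, but this is a purely cosmetic difference.
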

 
\begin{proof}  By induction on $n$.  If $n=1$ and $q =
Q(x_1,x_1)$,  there is
exactly one path of length $k$ in $A_{1}$ and it has
measure $q^k$.  Therefore
\[  G_{A_1}(x_1,x_1) = \sum_{k=0}^\infty  q^k = 
\frac 1{1-q}. \]
Assume the result is true for each $A_j\subsetneq A$, and note that if
$g(x) = G_{A_j}(x,x_j)$, then
\[    [I - Q_{A_j}] \, g = \delta_{x_j}
\]
 Using Cramer's rule to solve this linear system.
 we see that
 \[   G_{A_j}(x_j,x_j) = \frac{\det[I - Q_{A_{j+1}}]}
     { \det [ I - Q_{A_{j}}]}. \]

\end{proof}

\begin{proposition}  \label{prop2} If $Q$ is a Hermitian acceptable matrix, then for
each $x$, $G(x,x) >0$.  In particular, $\laplace$ and $G = \laplace^{-1}$
are   positive definite Hermitian
matrices.
\end{proposition}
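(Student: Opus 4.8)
The plan is to reduce both assertions to a single fact: the eigenvalues of $Q$ are real and lie in the open interval $(-1,1)$. Granting this, $\laplace = I - Q$ is Hermitian (being $I$ minus a Hermitian matrix) with every eigenvalue of the form $1-\mu\in(0,2)$, hence positive definite; and $G=\laplace^{-1}$ is Hermitian with eigenvalues $(1-\mu)^{-1}>0$, hence also positive definite. Positivity of the diagonal entries then follows at once from positive definiteness, since $G(x,x)=\overline{\delta_x}\cdot G\,\delta_x>0$ for the indicator $\delta_x$.

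So the content is in locating the spectrum of $Q$. Because $Q$ is Hermitian its eigenvalues are real, so it suffices to bound its spectral radius by $1$. Writing $|Q|$ for the matrix of entries $|Q(x,y)|$, the triangle inequality applied repeatedly to matrix multiplication gives the entrywise bound $|Q^j(x,y)|\le(|Q|^j)(x,y)$ for all $j\ge 0$; hence, by Gelfand's formula together with monotonicity of a suitable entrywise norm, the spectral radius of $Q$ is at most that of $|Q|$, which by the definition of \emph{acceptable} is strictly less than $1$. Therefore every eigenvalue $\mu$ of $Q$ satisfies $\mu\in(-1,1)$, as needed, and the previous paragraph finishes the proof.

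The only genuinely non-formal ingredient above is the inequality between the spectral radii of $Q$ and $|Q|$; everything else is bookkeeping with the definitions, so I expect that to be the main (and rather mild) obstacle. If one prefers to stay inside the combinatorial framework already developed, there is an alternative route that also explains why $G(x,x)>0$ is singled out first: by the reversal symmetry $Q(\omega^R)=\overline{Q(\omega)}$ and the fact that $\omega\mapsto\omega^R$ permutes the first-return loops at $x$, the quantity $f_x$ of \eqref{sept4.1} satisfies $f_x=\overline{f_x}$, hence is real; since $|f_x|<1$ the renewal identity \eqref{sept4.2} gives $G(x,x)=(1-f_x)^{-1}>0$ directly. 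Applying this to every principal submatrix $Q_V$ (which is again Hermitian and acceptable, its absolute-value matrix being a principal submatrix of $|Q|$) and feeding the resulting positivity into Proposition~\ref{prop1} shows that every leading principal minor of $\laplace$ is positive, so Sylvester's criterion gives that $\laplace$, and hence $G=\laplace^{-1}$, is positive definite.
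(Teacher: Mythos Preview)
Your proof is correct. Interestingly, you give two arguments, and they line up neatly with a comparison to the paper.

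Your \emph{alternative} route---use $Q(\omega^R)=\overline{Q(\omega)}$ to see that $f_x$ is real, conclude $G(x,x)=(1-f_x)^{-1}>0$ from \eqref{sept4.2}, then feed this into Proposition~\ref{prop1} and invoke Sylvester's criterion---is exactly the paper's proof.

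Your \emph{primary} route is genuinely different: you bypass the combinatorics of first-return loops entirely and argue purely spectrally, bounding $\rho(Q)\le\rho(|Q|)<1$ via the entrywise domination $|Q^j(x,y)|\le(|Q|^j)(x,y)$ and Gelfand's formula applied to an entrywise-monotone matrix norm (the $\ell^\infty$ operator norm works). This is shorter and more conceptual; it uses nothing about paths or loops and would work verbatim for any matrix $Q$ whose absolute-value matrix has spectral radius below $1$. The paper's approach, by contrast, stays inside the loop-measure framework it is developing: it derives $G(x,x)>0$ first from the renewal structure, and only then deduces positive definiteness via the principal-minor product of Proposition~\ref{prop1}. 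That ordering is thematically appropriate for the paper but is not logically forced, as your spectral argument shows.
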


\begin{proof}
It is immediate that $\Delta$ and $G$
are Hermitian.
 If $\omega$
is  a path in \eqref{sept4.1}, then so is $\omega^R$.  
Since $Q(\omega^R) = \overline{Q(\omega)}$, we can see
that  $\Im[f_x] =0 $, and hence  $-1 < f_x < 1$.
As in \eqref{sept4.2}, we can write
\[   G(x,x)   =
\sum_{k=0}^\infty f_x^k = \frac{1}{1 - f_x} >0.\]
Combining this with proposition \ref{prop1}, we see
that  each principal minor of $\Delta$
 is positive and hence $\Delta$ is positive definite. 
\end{proof}

 \section{Loop measures}

 \subsection{Definition}  \label{defsec}

 Let $\loopset = \loopset(A)$ denote the set
 of rooted loops of strictly positive length.
 If $Q$ is an acceptable weight, then 
the {\em (rooted) loop measure (associated to $Q$)} is the 
 complex measure
$m = m^Q$
on  $\loopset$, given by
\[          m(\omega) = \frac{Q(\omega)}{|\omega|}. \]
Note that the loop measure is {\em not}
the same thing as the path measure restricted to loops.  
An {\em unrooted loop} is an equivalence class of 
rooted loops  in $\loopset$ under
the equivalence relation generated by
\[  [\omega_0,\ldots,\omega_n] \sim [\omega_1,\ldots,\omega_n,\omega_1].\]
In other words, an unrooted loop is a loop for which
one forgets the ``starting point''.
We will write $\uloop$ for unrooted loops
and we let $\tilde \loopset$ denote the set
of unrooted loops.  We write $\omega \sim
\uloop$ if $\omega$ is in the equivalence class  $\uloop$.  The measure
$m$  induces  a measure that we call $\umeas$ by
\[   \umeas(\uloop) = \sum_{\omega \sim \uloop} m(\omega) . \]
We make several remarks.
\begin{itemize}
\item  Unrooted loops have forgotten their roots but have not 
lost their orientation.  In particular, $\uloop$ and $\uloop^R$
may be different unrooted loops.
\item  Since  $Q(\omega)$ and $ |\omega|$ are functions of the
unrooted loop, 
  we can write $Q(\uloop), |\uloop|$.  If $Q$ is Hermitian, then
$Q(\uloop^R) = \overline{Q(\uloop)}.$
\item Let $d(\uloop)$ denote the number of rooted loops
$\omega$ with $\omega \sim \uloop$. 
Note that  $d(\uloop)$ 
is an integer that divides $|\tilde \omega|$, but it is 
possible that $d(\uloop) < |\uloop|$.
 For example,
if $a,b,c$ are distinct elements and  $\uloop$ is the
unrooted loop with representative 
\[    \omega = [a,b,c,a,b,a,b,c,a,b,a] , \]
then $|\uloop| = 10$ and $d(\uloop)= 5$. Note that
\[   \umeas (\uloop) = \frac{d(\uloop)}{|\uloop|}
 \, Q(\uloop). \]
\item  Suppose that an unrooted loop $\uloop$ 
with $|\uloop| = n$ 
has $d = d(\uloop) $ rooted representative.  In other
words, the loop ``repeats'' itself after $d$ steps and
does $n/d$ such repetitions.
Suppose $k >0$ of these rooted representatives are
rooted at $x$. In the example above, $k=2$ for $x=a$
and $x=b$ and $k=1$ for $x=c$. 
  Then the total number of times that
the loop visits $x$ is $k(n/d)$.  Suppose that we give
each of the $k$ loops that are rooted at $x$ measure
$Q(\uloop)/[kn/d]$ and give all the other rooted representatives
of $\uloop$
measure zero.  Then the induced measure on unrooted loops
is the same as the usual unrooted loop measure, giving
measure $(d/n) \, Q(\uloop)$ to $\uloop$.

\item In other words,  
if we give each {\em  rooted} loop rooted at $x$ measure $Q(\omega)/k$
where $k$ is the number of visits to $x$, then the induced
measure on unrooted loops restricted to loops that intersect
$x$ is the same as $\umeas$. 

\item  One reason that the unrooted loop measure is
useful is that one can move the root around to do calculations.
The next lemma is an example of this.

\end{itemize}

Let 
\[  F(A) = F^Q(A) = \exp \left( 
 \sum _{\uloop \in \tilde \loopset} \umeas(\uloop)\right)
 = \exp \left( 
 \sum _{\omega \in  \loopset} m(\omega)\right). \]
If $V \subset A$, we let
\[   F_V(A) = \exp \left( 
 \sum_{\uloop \in \tilde \loopset, \uloop
\cap V \neq \eset }  \umeas(\uloop) \right).\]
  Note that $F_A(A) = F(A)$.
If $V = \{x\}$, we write just $F_x(A)$.  The next
lemma relates the Green's function to the exponential
of the loop measure; considering the case where $Q$ is positive shows that the sum converges absolutely.  As a corollary, we will have
a relationship between the determinant of the Laplacian
and the loop measure.

\begin{lemma}  \label{lemma2.1}
\[      F_x(A) =  G(x,x)  . \]
   More generally,
if $V = \{x_1,\ldots,x_l\}   \subset A$ and $A_j =A
\setminus \{x_1,\ldots,x_{j-1}\}$, then
\[    F_V(A)= \prod_{j=1}^l  G_{A_j}(x_j,x_j) 
    .\]

\end{lemma}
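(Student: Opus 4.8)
The plan is to prove the single-point identity $F_x(A) = G(x,x)$ first, and then bootstrap to the general product formula by a telescoping/induction argument that parallels Proposition~\ref{prop1}.

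For the single-point case, I would use the fourth and fifth bullet points preceding the lemma, which say that $\umeas$ restricted to loops meeting $x$ can be computed by giving each \emph{rooted} loop $\omega$ rooted at $x$ the weight $Q(\omega)/k$, where $k = k(\omega)$ is the number of visits of $\omega$ to $x$. So $\sum_{\uloop \cap \{x\} \neq \eset} \umeas(\uloop) = \sum_{\omega} Q(\omega)/k(\omega)$, where the sum is over rooted loops $\omega$ based at $x$ with $|\omega| \geq 1$. Now I decompose such a loop according to its excursions between successive returns to $x$: a rooted loop at $x$ with exactly $k$ visits to $x$ (i.e. $\omega_0 = \omega_n = x$ and $x$ appears $k$ times among $\omega_1,\dots,\omega_n$, counting the endpoint) is an ordered concatenation of $k$ first-return excursions, each contributing a factor that sums to $f_x$ from \eqref{sept4.1}. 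Hence $\sum_{k\text{ visits}} Q(\omega) = f_x^k$, and therefore
\[
\sum_{\omega} \frac{Q(\omega)}{k(\omega)} = \sum_{k=1}^{\infty} \frac{f_x^k}{k} = -\log(1 - f_x).
\]
Exponentiating and using \eqref{sept4.2}, which gives $G(x,x) = (1-f_x)^{-1}$, yields $F_x(A) = 1/(1-f_x) = G(x,x)$. Absolute convergence throughout is justified, as the lemma statement suggests, by checking the case where $Q$ is positive (so $0 \le f_x < 1$) and noting all the series are dominated termwise by that case.

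For the general statement, I would induct on $l$, or equivalently peel off $x_1$. The key observation is that a loop $\uloop$ with $\uloop \cap V \neq \eset$ either meets $x_1$, or it meets $V$ but avoids $x_1$, i.e. it lives in $A_2 = A \setminus \{x_1\}$ and meets $V \setminus \{x_1\}$. This gives the disjoint decomposition
\[
\sum_{\uloop \cap V \neq \eset} \umeas(\uloop)
= \sum_{\uloop \cap \{x_1\} \neq \eset} \umeas^Q(\uloop)
+ \sum_{\substack{\uloop \subset A_2 \\ \uloop \cap (V\setminus\{x_1\}) \neq \eset}} \umeas^{Q_{A_2}}(\uloop),
\]
where in the second sum the loop measure for $A$ restricted to loops inside $A_2$ coincides with the loop measure built from $Q_{A_2}$ (since $Q(\omega)$ only uses edges of $\omega$). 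Exponentiating turns the sum into the product $F_{x_1}(A) \cdot F_{V \setminus \{x_1\}}(A_2)$; the first factor is $G(x_1,x_1)$ by the single-point case, and the second factor is $\prod_{j=2}^{l} G_{A_j}(x_j,x_j)$ by the inductive hypothesis applied to the chain $A_2 \supset A_3 \supset \cdots$, completing the induction.

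\textbf{The main obstacle} is the excursion-decomposition bookkeeping in the single-point case: one must be careful that every rooted loop based at $x$ factors \emph{uniquely} as an ordered tuple of first-return excursions, that the number of $x$-visits equals the number of excursions, and that the resulting $1/k$ exactly produces the logarithmic series — the division by $|\omega|$ in the loop measure versus the division by $k$ in the reweighted measure is precisely what makes this work, and spelling out why the reweighting (bullet points four and five) is legitimate is the conceptual heart. The inductive step, by contrast, is essentially formal once one notes that restricting the class of loops to those inside $A_2$ is the same as passing to the weight $Q_{A_2}$.
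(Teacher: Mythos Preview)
Your proposal is correct and follows essentially the same route as the paper: both use the reweighting from the bullets to express $\sum_{\uloop \ni x}\umeas(\uloop)$ as $\sum_\omega Q(\omega)/k(\omega)$ over rooted loops at $x$, group by the number $k$ of visits to $x$ (the paper calls this set $\mathcal{A}_k$ and writes $\umeas[\mathcal{A}_k]=f_x^k/k$), sum the logarithmic series, and then iterate to get the product formula. Your excursion decomposition and inductive peeling of $x_1$ are exactly what the paper's one-line ``by iterating this fact'' is abbreviating.
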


\begin{proof}  Let ${\mathcal A}_k$ denote the set of
$\uloop \in \tilde \loopset$   
that have $k$ different representatives that are rooted at $x$.
   By spreading the mass evenly
 over these $k$ representatives,  as described in the second and
 third to last bullets
 above, we can see that
 \[   \umeas\left[{\mathcal A}_k\right] = \frac{1}{k} \, f_x^k.\]
 Hence,
 \[ \umeas\left[ \bigcup_{k=1}^\infty 
 {\mathcal A}_k\right] = \sum_{k=1}^\infty \frac{1}{k} \, f_x^k
   = - \log [ 1 - f_x] = \log G(x,x) .\]
This gives the first equality 
and by iterating this fact, we get the second equality.

\end{proof}

\begin{corollary}  \label{2,2}
  \[   F(A) = \frac{1}{\det \laplace}. \]
      \end{corollary}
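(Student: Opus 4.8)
The plan is to combine Lemma~\ref{lemma2.1} with Proposition~\ref{prop1}; no new ideas are needed. First I would apply Lemma~\ref{lemma2.1} with $V = A = \{x_1,\ldots,x_n\}$. Then $A_j = A \setminus \{x_1,\ldots,x_{j-1}\}$ is exactly the nested family appearing in Proposition~\ref{prop1}, and since every unrooted loop in $\tilde\loopset$ has strictly positive length it necessarily meets $A$, so the condition $\uloop \cap V \neq \eset$ is vacuous and $F_V(A) = F_A(A) = F(A)$. The lemma therefore gives
\[ F(A) = \prod_{j=1}^n G_{A_j}(x_j,x_j). \]

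Next I would invoke Proposition~\ref{prop1}, which states that this same product equals $1/\det\laplace$. Equating the two displays yields $F(A) = 1/\det\laplace$, as claimed.

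There is essentially no obstacle: the only points to verify are that the two nested sequences $A_j$ literally coincide and that $F_A(A)$ really is the full loop-measure exponential $F(A)$, both of which are immediate from the definitions. As a sanity check one may observe that the right-hand side $1/\det\laplace$ is manifestly independent of the ordering $x_1,\ldots,x_n$, which is consistent with the fact that the left-hand side was defined without reference to any ordering; this is a consistency remark rather than a step in the argument.
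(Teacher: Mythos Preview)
Your proposal is correct and follows the paper's own proof essentially verbatim: enumerate $A=\{x_1,\ldots,x_n\}$, apply Lemma~\ref{lemma2.1} with $V=A$ to get $F(A)=\prod_{j=1}^n G_{A_j}(x_j,x_j)$, and then invoke Proposition~\ref{prop1} to identify this product with $1/\det\laplace$. Your added remarks about $F_A(A)=F(A)$ and ordering-independence are fine but are not needed beyond what the paper does.
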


 \begin{proof}  Let $A  =  \{x_1,\ldots,x_n\}, A_j
  = \{x_j,\ldots,x_n\}$. 
   By proposition \ref{prop1} and lemma \ref{lemma2.1},
  \[   \frac{1}{\det \laplace}
   = \prod_{j=1}^n  G_{A_j}(x_j,x_j) = F(A). \]

  \end{proof}

 Suppose $f$ is a complex valued function defined on $A$
 to which we associate the diagonal matrix 
 \[        D_{f}(x,y) = \delta_{x,y}  \, f(x). \]
 Let $Q_f = D_{1/(1+f)}\,  Q$, that is, 
 \[   Q_f(x,y) =     \frac{Q(x,y)} {1 + f(x)}. \]
 If $Q$ is acceptable, then for $f$ sufficiently
 small,   $Q_{f}$ will be an acceptable matrix for which
 we can define the loop measure $m_f$.
   More specifically, if $\omega  =[\omega_0,\ldots,\omega_n] \in \loopset$,
  then
\[  m_f(\omega) = \frac{Q_f(\omega)}{|\omega|}
 = m(\omega) \, \prod_{j=1}^n \frac{1}{1 + f(\omega_j)} , \]
\[   \umeas_f(\uloop) = 
   \umeas(\uloop) \, \prod_{j=1}^n \frac{1}{1 + f(\omega_j)} .\]
 Hence, if $G_f = G^{Q_f}$,
$$\det G_f = \exp\left(\sum_{\omega \in \loopset} m_f(\omega)\right)
= \exp\left(\sum_{\uloop \in \tilde \loopset} \tilde
m_f(\uloop)\right).
$$

 \textbf{Example.}
Consider a
 one-point space $A = \{x\}$ with $Q(x,x) =q
  \in (0,1)$.  For each $n > 0$, there is exactly one
  loop $\omega^n$  of length $n$ with 
  $Q(\omega^n) = q^n, m(\omega^n) = q^n/n$. 
  Then,  $\Delta$ is the $1 \times 1$ matrix
  with entry $1-q$, 
  \[  G_A(x,x) = \sum_{n=0}^\infty q^n = \frac1{1-q} , \]
  and
  \[   \sum_{\omega \in \loopset} m(\omega)
      = \sum_{n=1}^\infty \frac{q^n}{n} = -\log[1-q]. \]

\subsection{Relation to loop-erased walk}

Suppose  $\overline A$ is a finite set, $A \subsetneq \overline A,
\p A = \overline A \setminus A$, and  
 $Q$ is a an acceptable matrix on $\overline A$.
 Let  $\paths(A)$
denote the set of paths $ \omega =[\omega_0,\ldots,\omega_n]$
  with $\omega_n  \in \p A $ and
$\{\omega_0,\ldots,\omega_{n-1} \} \subset A.$ 
For each path $\omega$, there exists a unique loop-erased
path $LE(\omega)$ obtained from $\omega$
by chronological loop-erasure as follows.
\begin{itemize}
\item  Let $j_0 = \max\{j: \omega_j = \omega_0 \}$.
\item  Recursively, if $j_k < n$, then $j_{k+1} =
 \max\{j: \omega_j = \omega_{j_k + 1} \}$.
 \item   If $j_k = n$, then 
$  LE(\omega) = [\omega_{j_0},\ldots,\omega_{j_k}].$
 \end{itemize}
 If $\eta = [\eta_0,\ldots,\eta_k]$ is a self-avoiding
 path in  $\paths(A)$, we define its loop-erased measure
 by
 \[     \hat Q(\eta;A) = \sum_{\omega \in \paths(A), \,
  LE(\omega) = \eta} Q(\omega). \]
  The loop measure gives a convenient way to describe
  $\hat Q(\eta;A)$.
  
  \begin{proposition}
  \[    \hat Q(\eta;A) = Q(\eta) \,  F_\eta(A).\]
\end{proposition}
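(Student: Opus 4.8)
The plan is to decompose the path sum defining $\hat Q(\eta;A)$ according to the loops that get erased at each vertex of $\eta$. Fix a self-avoiding path $\eta = [\eta_0,\ldots,\eta_k]$ in $\paths(A)$. If $\omega \in \paths(A)$ satisfies $LE(\omega) = \eta$, then by the construction of chronological loop-erasure, $\omega$ must have the form of $\eta_0$ followed by a loop at $\eta_0$ staying in $A$ (this is the portion erased before $j_0$), then the step from $\eta_0$ to $\eta_1$, then a loop at $\eta_1$ staying in $A\setminus\{\eta_0\}$, then the step to $\eta_2$, and so on; at the last vertex $\eta_k \in \p A$ no loop is appended since the path terminates. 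The key structural observation, which I would state and justify carefully, is that the loop appended at $\eta_j$ must avoid $\eta_0,\ldots,\eta_{j-1}$ (otherwise the definition of $j_i$ as the \emph{last} visit to $\eta_i$ would be violated), but is an otherwise arbitrary loop in $A_{j+1} := A \setminus \{\eta_0,\ldots,\eta_{j-1}\}$ rooted at $\eta_j$, including the trivial loop.

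Given this decomposition, multiplicativity of the path measure $Q(\cdot)$ over concatenation yields
\[
\hat Q(\eta;A) = \left(\prod_{j=1}^k Q(\eta_{j-1},\eta_j)\right) \prod_{j=0}^{k-1} \left(\sum_{\omega' \in \paths_{\eta_j,\eta_j}(A_{j+1})} Q(\omega')\right) = Q(\eta)\, \prod_{j=0}^{k-1} G_{A_{j+1}}(\eta_j,\eta_j).
\]
Here I have used that $\sum_{\omega' \in \paths_{x,x}(V)} Q(\omega') = G_V(x,x)$ by the definition of the restricted Green's function given earlier. Now I invoke Lemma \ref{lemma2.1}: with $V = \{\eta_0,\ldots,\eta_{k-1}\}$ (the vertices of $\eta$ lying in $A$, which are exactly those a loop can meet) and with the nested sets $A_{j+1} = A \setminus \{\eta_0,\ldots,\eta_{j-1}\}$ matching the lemma's indexing, we get $\prod_{j=0}^{k-1} G_{A_{j+1}}(\eta_j,\eta_j) = F_V(A) = F_\eta(A)$, where $F_\eta(A)$ denotes $F_V(A)$ for $V$ the vertex set of $\eta$. (Note that $\eta_k \in \p A \subset \overline A \setminus A$ contributes nothing, so it is harmless to include or exclude it from $V$.) Combining gives $\hat Q(\eta;A) = Q(\eta)\, F_\eta(A)$.

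The main obstacle is the bijective/combinatorial step: verifying precisely that the map sending $\omega$ with $LE(\omega)=\eta$ to the tuple (loop at $\eta_0$ in $A_1$, loop at $\eta_1$ in $A_2$, $\ldots$, loop at $\eta_{k-1}$ in $A_k$) is a bijection onto the product of the corresponding loop sets, with $Q$-measures multiplying correctly. The forward direction requires checking that the "last visit" maxima $j_0 < j_1 < \cdots < j_k = n$ cleanly cut $\omega$ into these pieces and that each excursion between $j_{i-1}+1$ and $j_i$ (after removing the single forward step) is a loop at $\eta_i$ avoiding all earlier $\eta$'s; the reverse direction requires checking that concatenating such loops with the forward steps of $\eta$ produces a path whose loop-erasure is exactly $\eta$ — one must confirm no "accidental" later visit to some $\eta_i$ is created, which is exactly why the loop at $\eta_j$ is forced to live in $A_{j+1}$ rather than all of $A$. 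Once this bookkeeping is pinned down, the rest is just multiplicativity of $Q(\cdot)$ and an application of Lemma \ref{lemma2.1}.
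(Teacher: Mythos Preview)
Your proposal is correct and follows essentially the same route as the paper: decompose a path $\omega$ with $LE(\omega)=\eta$ into the forward steps of $\eta$ interspersed with rooted loops at each $\eta_j$ constrained to avoid $\eta_0,\ldots,\eta_{j-1}$, sum using multiplicativity of $Q$ to obtain $Q(\eta)\prod_j G_{A_j}(\eta_j,\eta_j)$, and then invoke Lemma~\ref{lemma2.1} to identify the product with $F_\eta(A)$. Aside from an inessential index shift (your $A_{j+1}$ is the paper's $A_j$) and your more explicit discussion of why the decomposition is a bijection, the arguments coincide.
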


\begin{proof}
We can decompose any path $\omega$ with $LE(\omega)
 = \eta$ uniquely as
 \[  l^0 \oplus[\eta_0,\eta_1] \oplus l^1 \oplus [\eta_1,\eta_2]
  \oplus \cdots  \oplus  l^{k-1} \oplus [\eta_{k-1},\eta_k] \]
 where $l^j$ is a rooted loop rooted at $\eta_j$ that is contained
 in $A_j := A \setminus \{\eta_0,\eta_1,\ldots,\eta_{j-1}\}$.
By considering all the possibilities, we see that the measure of
all walks with  $LE(\omega)
 = \eta$ is
\[   G_A(\eta_0,\eta_0) \, Q(\eta_0,\eta_1) \, G_{A_1}(
  \eta_1,\eta_1) \, \cdots\,  G_{A_{k-1}}(\eta_{k-1},\eta_{k-1})
   \, Q(\eta_{k-1},\eta_k), \]
   which can be written as
   \[ Q(\eta) \, \prod_{j=0}^{k-1}   G_{A_j}(\eta_j,\eta_j)
    = Q(\eta) \, F_\eta(A) .\]

 \end{proof}

There is a nice application of this to spanning trees.
 Suppose that $A
 = \{x_0,x_1,\ldots,x_n\}$ is the vertex set of a finite connected graph, 
 and let   $Q$  be the transition
probability for simple random walk on the graph, that is,
$Q(x,y) =1/d(x)$ if $x$ and $y$ are adjacent, where
$d(x)$ is the degree of $x$. 
Consider the following algorithm due to David
Wilson \cite{Wilson} to choose a
spanning tree from $A$:   
\begin{itemize}
\item Let  $ A' = \{x_1,\ldots,x_n\}$.
\item  Start a random walk at $x_1$ and run it until it
reaches $x_0$.  Erase the loops (chronologically)
 and add the edges
of the loop-erased walk to the tree.
\item  Let $x_j$ be the vertex of smallest index
 that has not been added
to the tree yet.  Start a random walk at $x_j$, let
it run until it hits a vertex that has been added to
the tree.  Erase loops and add the remaining edges to the tree.
\item Continue until we have a spanning tree.
\end{itemize}
It is a straightforward exercise using the
last proposition to see that for
any tree, the probability that it is chosen is exactly
\[     \left[\prod_{j=1}^n d(x_j)\right]^{-1} \,F(A')\]
 which by corollary \ref{2,2} can be written as
 \[     \left[ \det [I - Q_{A'}] \,  \prod_{j=1}^n d(x_j)\right]^{-1}
 =    \frac{1}{\det [D -  K]}.\]
Here $D(x,y) = \delta_{x,y} \, d(x)$ is the diagonal matrix of degrees and $K$
is the adjacency matrix, both restricted to $A'$.  (The
matrix $D - K$ is what graph theorists call the Laplacian.)
We can therefore conclude the following.
The second assertion  is a classical result due  to
Kirchhoff called the {\em matrix-tree theorem}.

\begin{theorem}  Every spanning is tree is equally likely
to be chosen in Wilson's algorithm.  Moreover, the
total number of spanning trees is 
$\det[D-K].$  In particular, $\det[D-K]$ does not depend on the
 ordering $\{x_0,\ldots,x_n\}$ of the vertices of $A$.
\end{theorem}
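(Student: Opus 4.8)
The plan is to fill in the ``straightforward exercise'' preceding the statement: compute the probability that a given spanning tree is the output of Wilson's algorithm, and then deduce all three claims. Fix a spanning tree $T$. Since the graph is connected, orienting each edge of $T$ toward $x_0$ turns $T$ into a rooted tree in which every vertex of $A' = \{x_1,\dots,x_n\}$ has exactly one outgoing edge, to its parent; spanning trees correspond bijectively to such rooted oriented trees. First I would note that there is only one way Wilson's algorithm can produce $T$: at the stage when the partial tree has vertex set $S$ (with $x_0\in S$) and $x_j$ is the vertex of least index outside $S$, the branch that must be added is the path following the oriented edges of $T$ from $x_j$ until the first return to $S$. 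Let $\eta^{(1)},\dots,\eta^{(r)}$ be these forced branches, let $S_0 = \{x_0\}\subset S_1\subset\cdots\subset S_r = A\cup\{x_0\}$ be the resulting vertex sets, and let $W_t = S_t\setminus S_{t-1}$ be the new vertices at stage $t$, i.e.\ the vertices of $\eta^{(t)}$ other than its endpoint in $S_{t-1}$. The sets $W_1,\dots,W_r$ partition $A'$, and at stage $t$ the walk runs in $B_t = (A\cup\{x_0\})\setminus S_{t-1} = A'\setminus(W_1\cup\cdots\cup W_{t-1})$ until it is absorbed in $S_{t-1}$.

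Since the successive walks use independent randomness, $\Prob(T \text{ is output}) = \prod_{t=1}^r \hat Q(\eta^{(t)};B_t)$ (with $Q(x,y)=1/d(x)$, the path measure $Q(\omega)$ is literally the probability the walk traces $\omega$). By the Proposition on loop-erased measures, $\hat Q(\eta^{(t)};B_t) = Q(\eta^{(t)})\,F_{W_t}(B_t)$, where I have used that $F_{\eta^{(t)}}(B_t)$ depends only on the set $W_t$ of non-endpoint vertices of $\eta^{(t)}$. The product $\prod_t Q(\eta^{(t)})$ runs over all oriented edges of $T$, one out of each vertex of $A'$, so it equals $\prod_{x\in A'} d(x)^{-1} = [\prod_{j=1}^n d(x_j)]^{-1}$. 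For the remaining factor I would telescope using the loop-soup description $F_{W_t}(B_t) = \exp\!\big(\sum \tilde m(\uloop)\big)$, the sum over unrooted loops $\uloop\subset B_t$ with $\uloop\cap W_t\neq\eset$: because $B_t = A'\setminus(W_1\cup\cdots\cup W_{t-1})$ and the $W_t$ partition $A'$, every unrooted loop $\uloop\subset A'$ is counted at exactly one stage, namely the least $t$ with $\uloop\cap W_t\neq\eset$. Hence $\prod_{t=1}^r F_{W_t}(B_t) = \exp\!\big(\sum_{\uloop\subset A'}\tilde m(\uloop)\big) = F(A')$, and therefore $\Prob(T\text{ is output}) = [\prod_{j=1}^n d(x_j)]^{-1} F(A')$.

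This value does not depend on $T$, which (together with the fact that the algorithm terminates almost surely, since simple random walk on a finite connected graph hits any nonempty set in finite time and there are at most $n$ stages) proves the first assertion. If $N$ denotes the number of spanning trees, summing the common probability over all of them gives $1$, so $N = \prod_{j=1}^n d(x_j)\big/F(A')$; by Corollary~\ref{2,2}, $F(A') = 1/\det[I - Q_{A'}]$, so $N = \prod_{j=1}^n d(x_j)\,\det[I - Q_{A'}] = \det\!\big[D(I - Q_{A'})\big] = \det[D-K]$, using $DQ_{A'}(x,y)=K(x,y)$ on $A'$. Since $N$ is the number of spanning trees, an invariant of the graph, $\det[D-K]$ is independent of the chosen labelling. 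The one step that is not pure bookkeeping is the telescoping identity $\prod_t F_{W_t}(B_t) = F(A')$; the rest is the decomposition into stages plus one application each of the Proposition and Corollary~\ref{2,2}.
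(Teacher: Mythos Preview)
Your proof is correct and follows exactly the route the paper intends: you carry out the ``straightforward exercise'' the paper leaves to the reader, applying the loop-erased measure proposition at each stage of Wilson's algorithm and then invoking Corollary~\ref{2,2}. The only step the paper does not make explicit is precisely your telescoping identity $\prod_t F_{W_t}(B_t)=F(A')$, which you justify cleanly via the partition of unrooted loops in $A'$ according to the first $W_t$ they meet; this is the natural way to do it and matches the spirit of Lemma~\ref{lemma2.1}.
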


\section{Loop soup and Gaussian free field}

\subsection{Soups}

If $\lambda >0$, then the Poisson distribution on
$\N$  is given by
\[       q^\lambda(k) = e^{-\lambda} \, \frac{\lambda^k}{k!}. \]
We can use this formula to define the Poisson ``distribution''
 for  $\lambda \in \C$.  In this case $q^\lambda$ is a complex measure 
 supported on $\N$  with variation measure  $|q^\lambda|$
 given by 
\[  |q^\lambda|(k) =  |e^{-\lambda} |\,
 \frac{|\lambda|^k}{k!} = e^{-\Re(\lambda)}\,
 \frac{|\lambda|^k}{k!}  , \] and   
 total variation
 \[     \|q^\lambda \| =
   \sum_{k=0}^\infty |q^\lambda|(k)  
    = \exp\{|\lambda| - \Re(\lambda) \} \leq  e^{2|\lambda|}. \]
Note that 
 \[    \sum_{k=1}^\infty |q^\lambda|(k) 
  =   \exp\{|\lambda| - \Re(\lambda) \}  \, [1 - e^{-|\lambda|}]
   \leq  |\lambda| \, e^{2|\lambda|}. \]
  The usual 
  convolution formula $q^{\lambda_1} * q^{\lambda_2} =
  q^{\lambda_1 + \lambda_2}$   holds, and if
  \[         \sum_{j=1}^\infty |\lambda_j| < \infty , \]\
  we can define the infinite convolution
  \[              \prod_{j}^*  q^{\lambda_j} =
    \lim_{n \rightarrow \infty} (q^{\lambda_{1}} * \cdots *
     q^{\lambda_{n}}) =q^{\sum \lambda_j}. \]

If $\lambda >0$ and $M_t$ is a Poisson process
with parameter $\lambda$, then the distribution
of $M_t$ is
\begin{equation} \label{100.1}
  q_{t}(\{k\}) = q^{t\lambda}(k) =  e^{-t\lambda} \, \frac{(t\lambda)^k}
    {k!}, \;\;\;\;k=0,1,2,\ldots
    \end{equation}
The family of measures $\{q_t\}$
 satisfy the semigroup law
$    q_{s+t} = q_s * q_t .$
If we are only interested in the measure $q_t$, then
we may choose $\lambda$ in \eqref{100.1} to be complex.
In this case the measures $\{q_t\}$ are not probability
measures but they still satisfy the semigroup law.
We call this the Poisson semigroup of measures with
parameter $\lambda$ and note that the Laplace
transform is given by   
$$
\sum_{k=0}^\infty e^{k\alpha } q_t(\{k\}) = \exp(t\lambda(e^\alpha -1)).
$$

Suppose $m$ is a complex measure on a countable set
$X$, that is, a complex function with
\[   \sum_{x \in X} |m(x)| < \infty.\]
Then we say that the soup generated by $m$  is the semigroup of
measures $\{q_t: t \geq 0\}$ on $\N^X$ where
$q_t$  is the product
measure  of $\{q_t^x: x \in X\} $ where  $\{q_t^x:
t \geq 0\}$ is
a Poisson semigroup of measures with parameter $m(x)$. Pushing forward $q_t$ along the map $\vf\mapsto\sum_{x\in X}\vf(x)$ to a measure on $\N\cup\{\infty\}$, we se that it agrees with $\prod^* q_t^x$ on $\N$ and thus $q_t$ is supported on the pre-image of $\N$, the set of $\vf \in 
\N^X$ with finite support which we will call $\N^X_{\rm fin}$.
The complex measure $q_t$ satisfies
\[  \|q_t\| \leq \prod_{x \in X} \|q_t^x\| \leq 
 \exp \left\{2t\sum_{x \in X} |m(x)| \right\}. \]
 
 Soups were originally
  defined when $m$ is a positive
 measure on $X$, in which case it is defined as an
 independent collection of Poisson processes $\{M_t^x:
  x \in X\}$ where $M_t^x$ has rate $m(x)$.  A
  realization $\ls_t$ of the soup at time $t$ is
  a multiset of $X$ in which the element $x$ appears
  $M_t^x$ times. In this case $q_t$ gives the
  distribution of the vector $(M_t^x: x \in X)$.

\subsection{Loop soup}

Suppose $Q$ is an acceptable weight with associated
loop measure $m$.  Let $0 < \epsilon < 1$  be such that
the matrix  with entries $ P_\epsilon(x,y) :=
e^\epsilon\,  |Q(x,y)|$ is
still acceptable.  Let $m$ be the rooted loop measure
associated to $Q$ and note that
\begin{equation}  \label{onion}
  \sum_{\omega \in \loopset} |\omega| \, e^{\epsilon |\omega|}
\, |m(\omega)| = \sum_{\omega \in \loopset} P_\epsilon(\omega) < \infty.
\end{equation}

The {\em(rooted) loop soup} is a ``Poissonian
 realization'' of the measure $m$.
To be more precise, recall that $\loopset$ is  the set of
rooted loops in $A$ with positive length.   A 
  multiset $\ls$ of loops is a  generalized
  subset of $\loopset$
   in which
  loops can appear more than once.  
  In other words 
  it is  an element $\{  \ls(\omega) :
  \omega \in \loopset\}$
of $\N^\loopset$  
where $\ls(\omega)$ denotes the number of times that 
$\omega$ appears in $\ls$. 
Then the  rooted loop soup
is the semigroup of measures $\lmeas_t =
\lmeas_{t,m}$ on $\N^\loopset$ given by the product
measure of the Poisson semigroups $\{\lmeas^\omega_t
: \omega \in \loopset\}$ where
$\lmeas^\omega_t$ has parameter $m(\omega)$. 
The measure $\lmeas^\omega_t$ is Poisson with
parameter $tm(\omega)$ and hence $\|\lmeas^\omega_t\|
\leq \exp\{2t|m(\omega)| \} $ and
\begin{equation}  \label{radish}
\sum_{k=1}^\infty| \lmeas^\omega_t(k) |
 \leq t |m(\omega)| \, e^{2t|m(\omega)|}.
 \end{equation}

For any  $x \in A$ and rooted loop
 $\omega $, we define the {\em (discrete) local time} $N^\omega(x)$ to be the number of visits of $\omega$ to $x:$
$$
N^\omega(x) = \sum_{j=0}^{|\omega|-1}1\{\omega_j=x\}
 = \sum_{j=1}^{|\omega|} 1\{\omega_j=x\}.
$$
Note that this is a function of an unrooted loop, so we can
also write $N^\uloop(x)$.   Also $N^{\omega^R}(x) =N^\omega(x)$.
We define the additive function  
  $L:\N^\loopset_{\rm fin} \rightarrow \N^A$ by  
\[   L_\ls(x) =  \sum_{\omega \in \loopset} 
\ls(\omega)  \, N^\omega(x) 
  . \] 
By pushing forward by $L$, the loop soup  $\lmeas_t$ induces
a measure on $\N^A$ which we denote by
$\mu_t = \mu_{t,m}$ and  refer to as the {\em discrete
occupation field.}   Indeed, since $\lmeas_t$ is a product
measure,  we can write $\mu_t$ as
\[  \mu_t = \prod^*_{\omega \in \loopset}  \mu_t^\omega\]
where the notation $\prod^*$  means convolution
and $\mu_t^\omega$ denotes the measure supported on
$\{kN^\omega: k=0,1,2,\ldots\}$ with 
\[   \mu^\omega_t (k N^\omega) =
       e^{-t m(\omega) } \, \frac{[tm(\omega)]^k}{k!}.\]
For future reference we note that since $N^\omega =
N^{\omega^R}$,
\[
   [\mu^\omega_t * \mu^{\omega^R}_t]
    (k N^\omega) =
       e^{-t [m(\omega) + m(\omega^R)] } \, \frac{t^k
        \, [m(\omega) + m(\omega^R)]^k}{k!},
      \]
and hence, 
\begin{equation}  \label{feb28.8}
 \mu_{2t} =  \prod^*_{\omega \in \loopset}  \mu_{2t}^\omega
  = \prod^*_{\omega \in \loopset}  \mu_{t}^\omega * \mu_t
 ^\omega = \prod^*_{\omega \in \loopset}  \mu_{t}^\omega * \mu_t
 ^{\omega^R}  = \mu_{t,m^R}, 
 \end{equation}
 where 
 \[       m^R(\omega) = m(\omega) + m(\omega^R) . \]

\subsection{A continuous occupation field}

In order to get a representation of the Gaussian
free field, we need to  change the discrete occupation field to a continuous
time occupation field.  We will do so in a simple way by replacing $N^\omega(x)$ with a sum of $N^\omega(x)$ independent rate one exponential random
variables.
  This is similar to the method of constructing
continuous time Markov chains from discrete time chains by adding exponential 
waiting times.

We say that a process $Y(t)$ is a {\em gamma process} if it has independent increments, $Y(0)=0$, and for any $t,s\geq 0,$ $Y(t+s)-Y(t)$ has a Gamma$(s,1)$ distribution. In particular, $Y(n)$ is distributed as the sum of $n$ independent rate one exponential random variables.
Let
$\{Y^x: x \in A\}$ be a collection of independent gamma processes.
If $\bar s =\{s_x: x \in A\} \in [0,\infty)^A$, we write
$Y(\bar s)$ for the random vector
$(Y^x(s_x))$.  The Laplace transform
  is well known,
\[   \E\left[\exp\{- Y(\bar s)\cdot f\}
 \right] = \prod_{x \in A} \frac{1}{[1+f(x)]^{s_x}} , \]
provided that $\|f\|_\infty < 1$.  In particular,
if $\ls \in \N^\loopset_{\rm fin}$, then
\begin{eqnarray}
  \E\left[\exp\{- Y( L_\ls)\cdot f \}
 \right] 
 &  =& \prod_{x \in A} \frac{1}{[1+f(x)]^{L_\ls(x)}}\nonumber\\
  &  = & \prod_{\omega \in \loopset}
     \prod_{x \in A} \frac{1}{[1 + f(x)]^{\ls(\omega)
      \, N^\omega(x) }}\nonumber  \\
     & = & \prod_{\omega
\in \loopset}\exp\left[-
\ls(\omega)(\ln(1+f)
\cdot N^\omega)
\right].  \label{feb28.1}
\end{eqnarray}
  

For positive $Q$, we could then define a continuous occupation field in terms of random
variables, and we let $\loops_t = Y(L_{\ls_t})$ by taking $\ls_t$ as an independent loop soup corresponding to $|Q|$. In order  to handle the
  general case,  we  define
the ``distribution'' of the continuous occupation
field at time $t$ to be
 the complex measure $\nu_t = \nu_{t,m}$  on $[0,\infty)^A$ 
given by 
\begin{equation}  \label{feb28.6}
    \nu_t(V)  =  \sum_{\ls \in \N^\loopset_{\rm fin} } \lmeas_{t}(\ls)
  \, \Prob\{Y(L_\ls) \in V \} 
  = \sum_{\bar k \in \N^{A}}
   \mu_t(\bar k) \,  \Prob\{Y(\bar k) \in V \},
    \end{equation}
   where  $
    V \subset [0,\infty)^A.$
We will write \[ \nu_t[h(\loops)]   = \int_{[0,\infty)^A}h(\loops)d\nu_t(\loops)\]
 provided
that 
$\int_{[0,\infty)^A}|h(\loops)| \, d|\nu_t|(\loops)
< \infty.$
\begin{lemma}
If $\E[|h(\loops_t)|]<\infty$, then $|\nu_t|[|h(\loops)|]<\infty$.
\end{lemma}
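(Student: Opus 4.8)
The plan is to show that the variation measure $|\nu_t|$ is bounded by a fixed finite multiple of the law of the genuine random vector $\loops_t$, after which integrability against that law transfers to $|\nu_t|$ and gives the claim. First I would unwind the two objects. Since $\ls_t$ is a loop soup for the \emph{positive} weight $|Q|$, its rooted loop measure is $m^{|Q|}(\omega)=|Q(\omega)|/|\omega|=|m(\omega)|$; hence $L_{\ls_t}$ has law $\mu_{t,|m|}$ on $\N^A_{\rm fin}$ and, by the same computation as in \eqref{feb28.6}, $\loops_t=Y(L_{\ls_t})$ has law
\[
  \rho_t(V)\;:=\;\sum_{\bar k\in\N^A}\mu_{t,|m|}(\bar k)\,\Prob\{Y(\bar k)\in V\},
\]
a positive measure on $[0,\infty)^A$ with $\rho_t[\,|h(\loops)|\,]=\E[|h(\loops_t)|]$. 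By \eqref{feb28.6}, $\nu_t$ is given by the same formula with the complex weights $\mu_t(\bar k)$ in place of $\mu_{t,|m|}(\bar k)$. So it suffices to produce a finite constant $C_t$ with the pointwise bound $|\mu_t(\bar k)|\le C_t\,\mu_{t,|m|}(\bar k)$ for every $\bar k\in\N^A$: summing that inequality against $\Prob\{Y(\bar k)\in V\}$ gives $|\nu_t(V)|\le C_t\,\rho_t(V)$ for every Borel $V$, which — since $\rho_t\ge0$ — upgrades to $|\nu_t|\le C_t\,\rho_t$ as measures, and then $|\nu_t|[\,|h(\loops)|\,]\le C_t\,\E[|h(\loops_t)|]<\infty$.

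For the pointwise bound I would exploit the convolution structure $\mu_t=\prod^*_{\omega\in\loopset}\mu_t^\omega$ and $\mu_{t,|m|}=\prod^*_{\omega\in\loopset}\mu_{t,|m|}^\omega$. Directly from $\mu^\omega_t(kN^\omega)=e^{-tm(\omega)}[tm(\omega)]^k/k!$ one reads off
\[
  |\mu^\omega_t|(kN^\omega)=e^{-t\Re m(\omega)}\,\frac{[t|m(\omega)|]^k}{k!}=e^{t(|m(\omega)|-\Re m(\omega))}\,\mu^\omega_{t,|m|}(kN^\omega),
\]
i.e.\ $|\mu^\omega_t|=e^{t(|m(\omega)|-\Re m(\omega))}\,\mu^\omega_{t,|m|}$ as measures. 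Using $|\nu*\nu'|\le|\nu|*|\nu'|$ (and passing to the limit in the infinite convolution) together with the fact that a nonnegative scalar pulls out of a convolution, this yields $|\mu_t|\le\prod^*_{\omega}|\mu^\omega_t|=C_t\,\mu_{t,|m|}$, where $C_t=\exp\{t\sum_{\omega\in\loopset}(|m(\omega)|-\Re m(\omega))\}$. This constant is finite because $\sum_{\omega\in\loopset}|m(\omega)|=\sum_{\omega\in\loopset}m^{|Q|}(\omega)<\infty$ — the absolute convergence already observed before Lemma \ref{lemma2.1}, equivalently the statement that its exponential equals $1/\det(I-|Q|)<\infty$ via Corollary \ref{2,2} — so in fact $C_t\le e^{2t\sum_\omega|m(\omega)|}$.

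Assembling the pieces: $|\nu_t|[\,|h(\loops)|\,]\le C_t\,\rho_t[\,|h(\loops)|\,]=C_t\,\E[|h(\loops_t)|]<\infty$, which is the assertion.

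The only delicate point is the measure-theoretic bookkeeping in the middle step: one must check that the infinite convolutions $\prod^*_\omega\mu_t^\omega$ and $\prod^*_\omega\mu_{t,|m|}^\omega$ converge in total variation and that both the inequality $|\mu_t|\le\prod^*_\omega|\mu_t^\omega|$ and the scalar factorization survive the limit. All of this rests on $\sum_\omega|m(\omega)|<\infty$, exactly the absolute-convergence input already used to set up the loop soup; the remaining ingredients ($|\nu*\nu'|\le|\nu|*|\nu'|$, and that $|\nu(V)|\le\mu(V)$ for all $V$ with $\mu\ge0$ forces $|\nu|\le\mu$) are standard and I would simply cite them.
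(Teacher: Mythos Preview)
Your proof is correct and follows the same core idea as the paper: show that the variation measure $|\nu_t|$ is dominated by the constant $C_t=\exp\{t\sum_\omega(|m(\omega)|-\Re m(\omega))\}$ times the law of the honest random vector $\loops_t$, then integrate $|h|$. The constant you obtain is exactly the paper's $\alpha$.

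The difference is where the comparison is carried out. The paper works one level up, at the loop-soup measure $\lmeas_t$ on the discrete set $\N^\loopset_{\rm fin}$. There $\lmeas_t$ is a genuine \emph{product} measure, so one gets the pointwise identity
\[
|\lmeas_t(\ls)|=\prod_{\omega}\Bigl|e^{-tm(\omega)}\Bigr|\,\frac{(t|m(\omega)|)^{\ls(\omega)}}{\ls(\omega)!}=\alpha\,\Prob\{\ls_t=\ls\}
\]
immediately, with no limiting procedure; the bound on $|\nu_t|$ then follows from the definition \eqref{feb28.6} and the triangle inequality. You instead work at the pushforward $\mu_t$ on $\N^A$, where the product becomes an infinite convolution and you need $|\nu*\nu'|\le|\nu|*|\nu'|$ plus a passage to the limit --- exactly the ``delicate point'' you flag. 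That bookkeeping is avoidable: the same inequality $|\mu_t(\bar k)|\le C_t\,\mu_{t,|m|}(\bar k)$ drops out in one line from the $\lmeas_t$ identity by summing over $\{\ls:L_\ls=\bar k\}$. So the paper's route is strictly simpler, though yours is also valid.
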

\begin{proof}
First, note that
$$
|\lmeas_t(\ls)| = \left |\prod_{\w\in\loopset}\lmeas_t^\w(\ls(\w))\right| = \left|e^{-t\sum_{\w\in\loopset} m(\w)}\right|\prod_{\w\in\loopset}\frac{(t|m(\w)|)^{\ls(\w)}}{\ls(\w)!}= \alpha\Prob\{\ls_t=\ls\}
$$
with $\alpha=e^{t\sum_{\w\in\loopset}|m(\w)|-\Re(m(\w))}$. Thus, taking $\sup$ over all finite partitions $\{V_i\}_{i=1}^n$ of $V$ into measurable sets,
\begin{eqnarray*}
|\nu_t|(V) &=& \sup \sum_{i=1}^n |\nu_t(V_i)| = \sup \sum_{i=1}^n \left|\sum_{\ls \in \N^\loopset_{\rm fin} } \lmeas_{t}(\ls)
  \, \Prob\{Y(L_\ls) \in V_i \} \right|\\
  &\leq&   \sup \sum_{i=1}^n  \sum_{\ls \in \N^\loopset_{\rm fin} }| \lmeas_{t}(\ls)|
  \, \Prob\{Y(L_\ls) \in V_i \}\\
  &=&  \sum_{\ls \in \N^\loopset_{\rm fin} }| \lmeas_{t}(\ls)|
  \, \Prob\{Y(L_\ls) \in V\} = \alpha \Prob\{\loops_t \in V\}. 
\end{eqnarray*}
\end{proof}

We will compute the Laplace transform of the measure
$\nu_t$, but first we need use the following lemma.
\begin{lemma}   \label{pepperlemma}
Suppose  $S$ is a countable set and 
 $F:S\times \N\to\C$  is a function with   $F(s,0)=1$ for all $s\in S$,
 \[   \sum_{s \in S}\left| \sum_{n=1}^\infty F(s,n)\right | < \infty  \]
 and
 $$
 \sum_{\psi\in\N_{\rm fin}^S} \left| \prod_{s\in S} F(s,\psi(s)) \right|<\infty.
 $$
 Then,
$$
\prod_{s\in S}\sum_{n=0}^\infty F(s,n) = \sum_{\psi\in \N_{\rm fin}^S}\prod_{s\in S} F(s,\psi(s)).
$$ 
\end{lemma}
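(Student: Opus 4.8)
The plan is to reduce the infinite product/sum identity to the trivial finite case by a truncation-and-dominated-convergence argument, using the two absolute summability hypotheses to control the error terms. First I would enumerate $S = \{s_1, s_2, \ldots\}$ (or note it is finite, in which case the identity is just the distributive law applied finitely many times). For a finite truncation $S_N = \{s_1,\ldots,s_N\}$, the distributive law gives
\[
\prod_{j=1}^N \sum_{n=0}^\infty F(s_j,n) = \sum_{\psi \in \N^{S_N}} \prod_{j=1}^N F(s_j,\psi(s_j)),
\]
where the rearrangement of the finitely many absolutely convergent series is justified by the first hypothesis (each $\sum_n F(s,n)$ converges absolutely, since $F(s,0)=1$ and $\sum_{n\ge 1}|F(s,n)|$ is finite as a term in a convergent sum of nonnegative reals). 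Identifying $\N^{S_N}$ with the set of $\psi \in \N_{\rm fin}^S$ supported on $S_N$, the right-hand side is a partial sum of the series $\sum_{\psi \in \N_{\rm fin}^S} \prod_{s\in S} F(s,\psi(s))$, which converges absolutely by the second hypothesis; hence it converges to the full sum as $N\to\infty$.

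Next I would show the left-hand side converges to $\prod_{s\in S}\sum_n F(s,n)$ as $N\to\infty$. Write $c_j = \sum_{n=0}^\infty F(s_j,n) = 1 + r_j$ with $r_j = \sum_{n\ge 1} F(s_j,n)$; the first hypothesis says $\sum_j |r_j| < \infty$, which is exactly the condition for the infinite product $\prod_j (1+r_j)$ to converge (absolutely), so the partial products converge to it. Combining the two convergences gives the claimed identity.

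The main obstacle — really the only subtlety — is making sure the second hypothesis is genuinely needed and correctly used: without it the right-hand side need not be well-defined as an unordered sum, and the truncated sums need not converge to it. I would therefore be careful to state that $\sum_{\psi\in\N_{\rm fin}^S}|\prod_s F(s,\psi(s))| < \infty$ guarantees both that the series on the right is unconditionally summable (so the limit of partial sums over the exhausting family $\{S_N\}$ equals the value) and that no issues of order of summation arise. A clean way to present this is to invoke the general fact: if $(a_\psi)_{\psi\in I}$ is absolutely summable over a countable index set $I$, then for any increasing sequence of finite subsets $I_N \uparrow I$ one has $\sum_{\psi\in I_N} a_\psi \to \sum_{\psi\in I} a_\psi$. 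Everything else is the elementary theory of infinite products and the finite distributive law, which I would not belabor.
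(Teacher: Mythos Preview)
Your argument is essentially the same as the paper's: enumerate $S$, expand the finite partial products by the distributive law, and pass to the limit using absolute convergence of the infinite product (first hypothesis) on the left and absolute summability over $\N_{\rm fin}^S$ (second hypothesis) on the right. The paper's proof is just a terser version of exactly this.

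One small correction: you justify the finite distributive step by saying ``each $\sum_n F(s,n)$ converges absolutely, since $\sum_{n\ge 1}|F(s,n)|$ is finite as a term in a convergent sum of nonnegative reals,'' attributing this to the first hypothesis. But the first hypothesis only bounds $\left|\sum_{n\ge 1} F(s,n)\right|$, not $\sum_{n\ge 1}|F(s,n)|$; these are different. The absolute convergence you want actually follows from the \emph{second} hypothesis: the functions $\psi$ supported at a single $s$ with $\psi(s)=n$ contribute $|F(s,n)|$ to that sum, so $\sum_{s}\sum_{n\ge 1}|F(s,n)| \le \sum_{\psi\neq 0}\left|\prod_s F(s,\psi(s))\right| < \infty$. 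Alternatively, you can bypass the issue entirely (as the paper implicitly does) by noting that the $N$-fold sum $\sum_{\psi\in\N^{S_N}}\prod_j F(s_j,\psi(s_j))$ is itself a subsum of the absolutely convergent series in the second hypothesis, so the rearrangement is automatic. Either way the fix is immediate and the rest of your proof stands.
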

\begin{proof}
Since $\sum_{s \in S} |\sum_{n=1}^\infty F(s,n)| < \infty$, the product on the left-hand side does not 
depend on the order.  For this reason we may assume that
$S$ is the positive integers and  write
\begin{eqnarray*}
  \prod_{s=1}^\infty \sum_{n=0}^\infty F(s,n)
 & =&  \lim_{J \rightarrow \infty}  \prod_{s=1}^J \sum_{n=0}^
 \infty  F(s,n)\\
 & = & \lim_{J\to\infty} \sum_{\psi\in \N^{J}}\prod_{s=1}^JF(s,\psi(s)) 
  =  \sum_{\psi\in \N_{\rm fin}^\infty}\prod_{s=1}^\infty F(s,\psi(s)).\\
 \end{eqnarray*}
 The last equality uses the absolute convergence of the final sum.
 \end{proof}


\begin{proposition}\label{mgfloops} For $f$ sufficiently 
small,
\begin{equation}  \label{feb26.3}
\nu_t[\exp(-\loops  \cdot f)] = \left(\frac{\det G_f}{\det G}\right)^{t}.
\end{equation}
\end{proposition}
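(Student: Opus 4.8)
The plan is to compute the left side straight from the definition \eqref{feb28.6} of $\nu_t$, using the Laplace transform identity \eqref{feb28.1}, and then to exchange a sum over loop configurations with a product over individual rooted loops via Lemma \ref{pepperlemma}. Set $u:=\ln(1+f)$, which is well defined once $\|f\|_\infty<1$. Combining \eqref{feb28.6} and \eqref{feb28.1} gives
\[
\nu_t[\exp(-\loops\cdot f)]=\sum_{\ls\in\N^\loopset_{\rm fin}}\lmeas_t(\ls)\prod_{\omega\in\loopset}\exp\bigl[-\ls(\omega)\,(u\cdot N^\omega)\bigr],
\]
and since $\lmeas_t$ is the product of the Poisson semigroups with $\lmeas_t^\omega(k)=e^{-tm(\omega)}(tm(\omega))^k/k!$, each summand factors over $\omega$. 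The part $\prod_{\omega\in\loopset}e^{-tm(\omega)}=\exp\bigl(-t\sum_{\omega}m(\omega)\bigr)$ is independent of $\ls$, converges absolutely, and equals $(\det G)^{-t}$ by Corollary \ref{2,2}. Factoring it out reduces the proposition to the identity
\[
\sum_{\ls\in\N^\loopset_{\rm fin}}\prod_{\omega\in\loopset}F(\omega,\ls(\omega))=\prod_{\omega\in\loopset}\sum_{k=0}^\infty F(\omega,k),\qquad F(\omega,k):=\frac{(tm(\omega))^k}{k!}\,e^{-k(u\cdot N^\omega)}.
\]

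Granting the interchange, the remainder is bookkeeping. Since $N^\omega(x)=\sum_{j=1}^{|\omega|}1\{\omega_j=x\}$ we have $e^{-u\cdot N^\omega}=\prod_{j=1}^{|\omega|}(1+f(\omega_j))^{-1}$, so by the definitions of $Q_f$ and $m_f$ one has $m(\omega)e^{-u\cdot N^\omega}=m_f(\omega)$ and therefore
\[
\sum_{k=0}^\infty F(\omega,k)=\exp\bigl[t\,m(\omega)\,e^{-u\cdot N^\omega}\bigr]=\exp[t\,m_f(\omega)].
\]
Hence $\prod_{\omega}\sum_k F(\omega,k)=\exp\bigl(t\sum_{\omega}m_f(\omega)\bigr)=(\det G_f)^t$, using the identity $\det G_f=\exp(\sum_{\omega}m_f(\omega))$ noted just before the Example, and multiplying by $(\det G)^{-t}$ gives \eqref{feb26.3}.

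The one substantive point is to license Lemma \ref{pepperlemma} with $S=\loopset$ and this $F$: one must check $F(\omega,0)=1$ (immediate), $\sum_{\omega}\bigl|\sum_{n\ge1}F(\omega,n)\bigr|<\infty$, and $\sum_{\psi\in\N^\loopset_{\rm fin}}\bigl|\prod_{\omega}F(\omega,\psi(\omega))\bigr|<\infty$. This is where smallness of $f$ enters. Fix $\epsilon$ as in \eqref{onion} and require $\|f\|_\infty<1-e^{-\epsilon}$; then $|1+f(\omega_j)|\ge1-\|f\|_\infty>e^{-\epsilon}$, so $|m_f(\omega)|\le|m(\omega)|(1-\|f\|_\infty)^{-|\omega|}\le|m(\omega)|\,e^{\epsilon|\omega|}$, and since $|\omega|\,|m(\omega)|=|Q(\omega)|$ this gives $\sum_{\omega}|m_f(\omega)|\le\sum_{\omega}P_\epsilon(\omega)<\infty$ by \eqref{onion}. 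Consequently $\sum_{n\ge1}F(\omega,n)=e^{tm_f(\omega)}-1$ has modulus at most $t|m_f(\omega)|e^{t|m_f(\omega)|}$, so the first series converges, and $|F(\omega,k)|=(t|m_f(\omega)|)^k/k!$, so the double sum equals $\exp\bigl(t\sum_{\omega}|m_f(\omega)|\bigr)<\infty$ (the purely positive case, which is also why the sum defining $\nu_t$ converges absolutely). With these estimates Lemma \ref{pepperlemma} applies and the displays above are valid. I expect this convergence accounting---pinning down exactly how small $f$ must be---to be the main obstacle; collapsing $\sum_k F(\omega,k)$ to $\exp[t m_f(\omega)]$ and invoking Corollary \ref{2,2} are routine.
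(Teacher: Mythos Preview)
Your proof is correct and follows essentially the same route as the paper's: expand via \eqref{feb28.6} and \eqref{feb28.1}, interchange sum and product using Lemma \ref{pepperlemma}, recognize the inner sum as $\exp[t\,m_f(\omega)]$, and invoke Corollary \ref{2,2}. The only cosmetic difference is that you factor out $\prod_\omega e^{-tm(\omega)}=(\det G)^{-t}$ \emph{before} applying Lemma \ref{pepperlemma}, so that your $F(\omega,0)=1$ exactly matches the lemma's hypothesis, whereas the paper keeps $\lmeas_t^\omega(n)$ intact and only strips the exponential when verifying the lemma's side condition; your version is arguably cleaner on this point, and your explicit choice $\|f\|_\infty<1-e^{-\epsilon}$ makes the smallness requirement more concrete than the paper's treatment.
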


%

\begin{proof}
We first claim that there exists $\delta > 0$ such that
if $\|f\|_\infty < \delta$, 
 $$  \E[ |\exp\{-\loops_t \cdot f \}   |  ]
     < \infty,
 $$
so that the left hand side of \eqref{feb26.3} is well defined. Indeed, if $\|f\|_\infty < \delta$,
 and $  (1-\delta)  =e^{-\epsilon}$,  then for any $\ls\in\N^\w_{\rm fin}$\[ \E\left[| \exp\{-Y(L_{\ls}) \cdot f \}   | \right]\leq  \prod_{\omega \in \loopset}
     |1 - \delta|^{-\ls(\omega) \, |\omega|} 
= 
        \prod_{\omega \in \loopset} 
          e^{ \epsilon \, |\omega|\,\ls(\omega)}, \]
           and hence
\begin{eqnarray*}
 \E[ |\exp\{-Y(L_{\ls_t}) \cdot f \}   |  ] &=& \E\left[\E[|\exp\{-Y(L_{\ls_t}) \cdot f \}   | \big | \ls_t]\right]\\
 &\leq& \E\left[\prod_{\w\in \loopset} e^{\epsilon |\w|\ls_t(\w)}\right] \\
 &=& \prod_{\w\in\loopset} \exp\left(t|m(\w)|(e^{\epsilon|\w|}-1)\right)
\end{eqnarray*}           
   which is finite for $\epsilon$ sufficiently small by \eqref{onion}.

We assume that $\|f\|_\infty < \delta $.
Using \eqref{feb28.1} we get 
\begin{eqnarray*}
\nu_t[\exp(-\loops  \cdot f)] &
= & \sum_{\ls\in \N^\loopset_{\rm fin}} \lmeas_t(\ls)
 \, \E\left[\exp(-Y(L_\ls)  \cdot f) \right]\\
& = &  \sum_{\ls\in \N^\loopset_{\rm fin}} \prod_{\omega
\in \loopset} \lmeas^\omega_t(\ls(\omega))\exp\left[
\ls(\omega)(-\ln(1+f)
\cdot N^\omega)
\right] \\
& = & \prod_{\omega\in \loopset}\sum_{n=0}^\infty \lmeas^\omega_t(n)\exp\left[
n(-\ln(1+f)
\cdot N^\omega)
\right]\\
&= &\prod_{\omega
\in \loopset}\exp\left[
tm(\omega)(\exp(
-\ln(1+f)
\cdot N^\omega
)-1)
\right].\\
\end{eqnarray*}
The third equality uses lemma \ref{pepperlemma}, which is valid as
$$
\sum_{\w\in\loopset}\left|
\sum_{n=1}^\infty \frac{(tm(\w))^n}{n!}\exp\left[n(-\ln(1+f)\cdot N^\w)\right]
\right|= \sum_{\w\in\loopset} \left|
\exp \left[tm_f(\w)\right]
-1\right|<\infty.
$$
The equality used
$$\exp(-\ln(1+f)\cdot N^\omega) = \prod_{x\in A} \exp \ln \left[\left(\frac 1 {1+f(x)}\right)^{N^\omega(x)}\right] = \prod_{j=0}^{|\omega|-1} \frac 1{1+f(\omega_j),}$$
which also gives us
\begin{eqnarray*}
\nu_t[\exp(-\loops  \cdot f)] 
&=&
{\exp\left[t
\sum_{ \omega
\in \loopset}m(\omega)
\prod_{j=0}^{|\omega|-1}\frac 1{1+f(\omega_j)}
\right]}\,
{\exp	 \left[-t\sum_{\omega
\in \loopset}m(\omega)
\right]} \\
& = & \exp\left[t\sum_{\omega
\in \loopset}
 m_f(\omega) \right]
  \, {\exp	 \left[-t\sum_{\omega
\in \loopset}m(\omega)
\right]} \\
&=&
\left(\frac{\det G_f}{\det G}\right)^{t}.
\end{eqnarray*}
The last equality is by corollary \ref{2,2}.
\end{proof}

Consider the one-point example at the end of section
\ref{defsec}.   If we let $s$ denote the function $f$ taking
the value $s$, then
\[   m_s(\omega^n) =  \frac{1}{n} \, \left[\frac{q}{1 + s}\right]^n,\]
and
\[    \sum_{\omega \in \loopset}  
 m_s(\omega^n)  = \sum_{n=1}^{\infty}
 \frac{1}{n} \, \left[\frac{q}{1 + s}\right]^n
    = -\log \left[1 - \frac{q}{1+s}\right] . \]
  Therefore, if $\loops_t$ denotes the continuous time
  occupation field at time $t$,
\begin{equation}  \label{mar11.1}
\E\left[e^{-s \loops_t}\right] = \left(\frac{\det G_s}{\det G}\right)^{t}=
   \left[\frac{1 + s - q(1+s)}
   {1 + s - q } \right]^{t}.
   \end{equation}

We recall that we have defined the (discrete time)
loop measure $m(\omega) = Q(\omega)/|\omega|$ and then
we have added continuous holding times.   Another
approach, which is the original one taken by Le Jan \cite{LeJan}, is
to construct a loop measure on continuous time paths.
Here we start with $Q$, add the waiting times to give a measure
on continuous time loops, and then divide the measure by
the (continuous) length.  Considered as a measure on
unrooted continuous time loops, the two procedures are 
essentially equivalent (although using discrete time loops makes it
easier to have ``jumps'' from a site to itself).

\subsection{Trivial loops}

We will see soon that the loop soup and the square of the Gaussian free field are closely related, but because our construction of the loop soup used discrete loops and only added continuous time afterwards, we restricted our attention to loops of positive length.
 We will
need to add a correction factor to the occupation time to account for these  trivial loops  which are formed by viewing the continuous time process before its first jump.

Consider the one-point example at the end of section
\ref{defsec}.  The Gaussian free field with covariance
matrix $[I - Q]^{-1}$ is just a centered normal random variable
 $Z$  with variance $1/(1-q)$  which we
 can   write as $  N/\sqrt{1-q}$ where
  $N$ is a standard normal.   Since  $N^2$ has a $\chi^2$
  distribution with one degree of freedom, we see that 
  \[        \E\left[e^{-sZ^2/2} \right] = \sqrt{\frac{1-q}{1-q+s} } \]
 If we compare this to \eqref{mar11.1}, we can see that
\[   \E\left[e^{-sZ^2/2} \right]  =
  \E\left[e^{-s \loops_{\frac12}}\right]  \, 
  \left[1 + \frac{s}{1-q}\right]^{-1/2} 
  .\]
  The second term on the right-hand side is the
  moment generating function for a  Gamma$(\frac 12,1)$
  random variable.  Hence we can see that $Z^2/2$ has the
  same distribution at $\loops_{\frac 12} + Y$ where $Y$
  is an independent  Gamma$(\frac 12,1)$ random
  variable. 

  The trivial loops we will add 
are not treated in the same way as the other loops.
To be specific, we add another collection of independent gamma processes $\{Y_{\text{trivial}}^x\}_{x\in A}$ and define the {\em occupation field of the trivial loops} as
$$
\trivial_t(x)= Y^x_{\text{trivial}}(t).
$$
When viewed in terms of the discrete time loop measure,
this seems unmotivated.  It is useful to consider
the  
  continuous time loop measure in terms of continuous
  time Markov chains. 
  For any $t$ prior to the first jump of the Markov chain, the path will form a trivial loop of time duration $t$. As the Markov chain has exponential holding times, the path measure (analogue of $Q$) to assign to such a trivial loop is $e^{-t}\, dt$, and so the loop measure (analogue of $m$)
  should be $ t^{-1} \, {e^{-t}} \, dt$.  Hence in the
  continuous time measure, we give trivial loops of
  time duration $t$ weight  $   {e^{-t}} /t$.
Since  $  {e^{-t}}/t $ is the intensity measure for the jumps of a gamma process,  we see that the 
added occupation time at $x$  corresponds to $\trivial_t(x)$.

We write $\nu_t^\trivial$ for the probability distribution
of $\trivial_t$.  In other words, it is the distribution
of independent gamma processes $\{Y^x(t): x \in A\}$.
Note that if  $\loops \in [0,\infty)^A$, 
\begin{equation}  \label{feb28.7}
   \nu_t^\trivial[\exp\{-f \cdot \loops\}]
 =    \prod_{x \in A} \frac{1}{[1 + f(x)]^t}
  = [\det D_{1 + f}]^{-t}   . 
  \end{equation}
We will also write
\[           \rho_t =\nu_t * \nu_t^\trivial, \]
which using \eqref{feb28.6}
 can also be written as
\[  \rho_t(V)  =  \sum_{\ls \in \N^\loopset_{\rm fin} } \lmeas_{t}(\ls)
  \, \Prob\{Y(L_\ls + \bar t) \subset V \}, \]
  where $\bar t$ denotes the vector each of whose components
  equals $t$.

\subsection{Relation to the  real Gaussian free field}

If $A$ is a finite set with $|A| = n$,
and $G$ is a  symmetric,
 positive definite  real  matrix,
then the 
 {\em (centered, discrete) Gaussian free field} on $A$ 
 with covariance matrix $G$
  is the random function  $\vf:A\to\R$, defined by having density 
  \[    \frac{1}{(2 \pi)^{n/2} \, \sqrt{\det G}}\, 
             \exp\left(-\frac12\vf \cdot G^{-1} \vf\right)
              =   \frac{1}{(2 \pi)^{n/2} \, \sqrt{\det G}}\, 
             \exp\left(-\frac12|J \vf |^2  \right)\]
with respect to Lebesgue measure on $\R^n$.  Here $J$
is a positive definite, symmetric square root of $G^{-1}$. 
In other words, $\vf$ is a $|A|$-dimensional 
mean zero 
normal random variable with covariance matrix $\E[\vf(x)\vf(y)]=G(x,y)$.

\begin{lemma}  Suppose $G$ is a symmetric  
 positive
definite  matrix and let $\phi$ denote a Gaussian
free field with covariance matrix $ G$.   
Then for all $f $ sufficiently small,
 \begin{equation}  \label{feb24.2}
\E\left[\exp\left(-\frac 12 \vf^2\cdot f\right)\right]  = \frac{1}{\sqrt{\det{(\laplace+D_f)}}}\,
\frac1 {\sqrt{\det{G}}}.
\end{equation}
\end{lemma}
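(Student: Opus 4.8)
The plan is to compute the left-hand side of \eqref{feb24.2} directly from the definition of the density of $\vf$ and to recognize the resulting integrand as a constant multiple of another Gaussian density. Here $\laplace = G^{-1}$, and I first rewrite the quadratic form appearing in the new exponential factor as $\vf^2 \cdot f = \sum_{x \in A} f(x)\, \vf(x)^2 = \vf \cdot D_f \vf$. Inserting the density of $\vf$, the left-hand side of \eqref{feb24.2} becomes
\[
\frac{1}{(2\pi)^{n/2}\sqrt{\det G}} \int_{\R^n} \exp\left(-\frac12 \vf\cdot G^{-1}\vf - \frac12 \vf\cdot D_f\vf\right)\, d\vf
= \frac{1}{(2\pi)^{n/2}\sqrt{\det G}} \int_{\R^n} \exp\left(-\frac12 \vf\cdot (\laplace+D_f)\vf\right)\, d\vf .
\]

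Next I would check that for $f$ sufficiently small the symmetric matrix $\laplace + D_f$ is still positive definite, so that the Gaussian integral converges. Since $G$ is positive definite so is $\laplace = G^{-1}$; positive definiteness is an open condition on symmetric matrices (the least eigenvalue depends continuously on the entries), so there is $\delta > 0$ such that $\|f\|_\infty < \delta$ forces $\laplace + D_f$ to be positive definite. For such $f$, the standard evaluation of a Gaussian integral gives
\[
\int_{\R^n} \exp\left(-\frac12 \vf\cdot (\laplace+D_f)\vf\right)\, d\vf = \frac{(2\pi)^{n/2}}{\sqrt{\det(\laplace + D_f)}} .
\]

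Combining the two displays and using $\det(G^{-1}) = 1/\det G$ yields
\[
\E\left[\exp\left(-\frac12 \vf^2 \cdot f\right)\right] = \frac{1}{\sqrt{\det G}\, \sqrt{\det(\laplace + D_f)}},
\]
which is \eqref{feb24.2}. There is no genuinely hard step here: the only things to attend to are the bookkeeping identity $\vf^2 \cdot f = \vf \cdot D_f \vf$ and the verification that $\laplace + D_f$ remains positive definite — hence that the integral converges and the standard Gaussian formula applies — for $f$ in a neighbourhood of the origin, which is exactly what the hypothesis ``$f$ sufficiently small'' is there to provide.
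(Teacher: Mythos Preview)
Your proof is correct and follows essentially the same approach as the paper: both insert the Gaussian density, combine the exponents into $-\tfrac12\,\vf\cdot(\laplace+D_f)\vf$, observe that $\laplace+D_f$ is positive definite for small $f$, and then evaluate the resulting Gaussian integral. The paper carries out that last step by explicitly introducing the positive definite square root $R_f$ of $\laplace+D_f$ and changing variables, whereas you simply invoke the standard Gaussian integral formula, but this is a cosmetic difference.
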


\begin{proof}  This is a standard calculation,
\begin{eqnarray*}
\E\left[\exp\left(-\frac 12 \vf^2\cdot f\right)\right] &=& \frac 1{(2 \pi)^{n/2} \, \sqrt{\det G}} \int_{\R^n} \exp\left(-\frac 12 \vf^2\cdot f\right)\exp\left(-\frac12\vf \cdot G^{-1}\vf\right) d\vf\\
&=& \frac 1{(2 \pi)^{n/2} \, \sqrt{\det G}}\int_{\R^n} \exp\left(-\frac 12 \vf \cdot (\laplace+D_f)\vf\right)d\vf.\\
\end{eqnarray*} 
If $f$ is  sufficiently small, then   $\laplace+D_f$  
  is a   positive definite symmetric
   matrix and so has a positive
   definite  square root, call it $R_f$. Then
\begin{eqnarray*}
\int_{\R^n} \exp\left(-\frac 12 \vf \cdot (\laplace+D_f)\vf\right)d\vf  &= &\int_{\R^n} \exp\left(-\frac 12 R_f\vf \cdot R_f\vf\right)d\vf
\\
&=  &\frac 1{\det R_f}\int_{\R^n} \exp\left({-\frac12\vf\cdot\vf}\right)d\vf\\
& = & \frac{(2 \pi)^{n/2}}{\sqrt{\det (\Delta + D_f)}} 
 .
\end{eqnarray*}

 \end{proof}

\begin{theorem}\label{gffLoop}
If $Q$ is a symmetric, acceptable
real matrix, and $\vf$ is the discrete Gaussian free field on $A$ 
with covariance matrix  $G $, then the distribution
of $\frac 12 \vf^2$ is $\rho_{\frac12} $.
 
\end{theorem}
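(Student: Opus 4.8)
The plan is to identify the two measures through their Laplace transforms on a neighborhood of the origin and then appeal to uniqueness of Laplace transforms for finite (complex) measures on $[0,\infty)^A$. I will work with real $f$ with $\|f\|_\infty$ small; in that regime everything in sight is a positive real, so no choice of branch is needed.

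First I would write down both sides. By the preceding lemma, equation~\eqref{feb24.2},
\[
\E\left[\exp\left(-\tfrac12\vf^2\cdot f\right)\right]=\frac{1}{\sqrt{\det(\laplace+D_f)}}\,\frac{1}{\sqrt{\det G}}.
\]
For the other side, since $\rho_t=\nu_t*\nu_t^\trivial$ we have
$\rho_{1/2}[\exp(-\loops\cdot f)]=\nu_{1/2}[\exp(-\loops\cdot f)]\cdot\nu_{1/2}^\trivial[\exp(-\loops\cdot f)]$,
and Proposition~\ref{mgfloops} together with \eqref{feb28.7} gives
\[
\rho_{1/2}[\exp(-\loops\cdot f)]=\left(\frac{\det G_f}{\det G}\right)^{1/2}\,[\det D_{1+f}]^{-1/2}.
\]
(For $\|f\|_\infty<\delta$ with $\delta$ small, $Q_f$ is acceptable, $\laplace+D_f$ is positive definite, $\E[|\exp(-\loops_{1/2}\cdot f)|]<\infty$, and $\|\rho_{1/2}\|\le\|\nu_{1/2}\|<\infty$ by the lemma following \eqref{feb28.6}, so all of these expressions are legitimate and finite.)

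Next I would reconcile the two formulas by a determinant computation. Since $Q_f=D_{1/(1+f)}Q$ and $D_{1+f}=I+D_f$, we have $G_f=(I-Q_f)^{-1}$ with
\[
I-Q_f=D_{1/(1+f)}\bigl(D_{1+f}-Q\bigr)=D_{1/(1+f)}\,(\laplace+D_f),
\]
hence $\det G_f=\dfrac{\det D_{1+f}}{\det(\laplace+D_f)}$. Substituting this into the expression for $\rho_{1/2}[\exp(-\loops\cdot f)]$, the factors of $\det D_{1+f}$ cancel and what remains is $\bigl(\det(\laplace+D_f)\,\det G\bigr)^{-1/2}$, which is precisely $\E[\exp(-\tfrac12\vf^2\cdot f)]$. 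Thus the Laplace transforms of $\rho_{1/2}$ and of the law of $\tfrac12\vf^2$ agree on $\{\|f\|_\infty<\delta\}$.

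Finally, $\tfrac12\vf^2$ is an honest $[0,\infty)^A$-valued random variable and $\rho_{1/2}$ is a finite complex measure on $[0,\infty)^A$; a finite complex measure on $[0,\infty)^n$ is determined by its Laplace transform on any open neighborhood of $0$ (by analyticity and the one-dimensional uniqueness theorem applied coordinate by coordinate), so the two measures coincide and in particular $\rho_{1/2}$ is the distribution of $\tfrac12\vf^2$. I expect the only delicate point to be bookkeeping rather than any one estimate: one must produce a single $\delta>0$ for which \eqref{feb24.2}, Proposition~\ref{mgfloops}, and finiteness of $\int e^{-\loops\cdot f}\,d|\rho_{1/2}|$ are simultaneously valid, and one must justify that equality of the complex Laplace transforms on that neighborhood is enough to conclude equality of the measures.
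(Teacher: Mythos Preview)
Your proof is correct and follows essentially the same route as the paper: both compute the Laplace transform of $\rho_{1/2}$ by multiplying the contributions from Proposition~\ref{mgfloops} and \eqref{feb28.7}, perform the same determinant manipulation $I-Q_f=D_{1/(1+f)}(\laplace+D_f)$ to cancel $\det D_{1+f}$, and compare with \eqref{feb24.2}. Your version is slightly more explicit about the uniqueness-of-Laplace-transform step and about choosing a single $\delta$, whereas the paper simply asserts that matching Laplace transforms on a neighborhood of zero suffices.
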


\begin{proof}
It suffices to  show that the Laplace transforms for $\frac 12 \vf^2$ and $\rho_{\frac 12 }$ exist and
agree on a neighborhood of zero.   We
have calculated the   transforms   for  $\loops_{\frac12}$ and
$\vf^2$ in \eqref{feb26.3}  and \eqref{feb28.7}
giving

\begin{eqnarray*}
\rho_{\frac 12}
 \left[\exp\{\loops \cdot
f \} \right] & = & \nu_{\frac12} \left[\exp\{\loops \cdot
f \} \right] \, \nu_{\frac 12}^\trivial \left[\exp\{\loops \cdot
f \} \right] 
\\&=&\det(D^{-1}_{1+f})^{\frac 12}\left(\frac{\det G_f}{\det G}\right)^{\frac 12}\\
&= &\det(D^{-1}_{1+f})^{\frac 12}\left(\frac{\det(I-D_{1+f}^{-1}Q)^{-1})}{\det G}\right)^{\frac 12}\\
&=&\left(\frac{\det(D_{1+f}-Q)^{-1}}{\det G}\right)^{\frac 12}\\
&=& \frac{1}{\sqrt{\det{(\laplace+D_f)}}}\,
\frac1 {\sqrt{\det{G}}}.
\end{eqnarray*}
Comparing this to \eqref{feb24.2} completes the proof.
\end{proof}

Conversely, suppose that a symmetric,
positive definite
real matrix $G$ is given, indexed by the elements
of $A$ and let $\{\phi(x): x \in A\} $ denote
the Gaussian free field.
 If the matrix $Q := I - G^{-1}$ is positive
definite and acceptable, then we can use loops to give
a representation of $\{\phi(x)^2: x \in A\}$. 
If $G$ has negative entries then so must $Q$ (since the
Green's function for positive weights is always positive).

\subsection{Complex weights}

There is also a relation between complex, Hermitian
weights and a complex Gaussian field. 
Let $A$ be a finite set with $n$ elements.
Suppose $G'$ is a positive definite Hermitian matrix and
let $K$ be a positive definite Hermitian square root of $(G')^{-1}$. 
The {\em (centered) complex Gaussian free field
on $A$  with covariance
matrix $ G'$}
 is defined to  
 be the measure on complex
functions 
$h: \R^A \rightarrow \C$ with
density
\[    \frac{1}{ \pi^{n } \,  {\det G'}}\, 
             \exp\left(-\overline 
             h \cdot  (G')^{-1} h\right)
              =   \frac{1}{ \pi^{n } \,  {\det G'}}\, 
             \exp\left(-|Kh|^2
          \right)\]
  with respect to Lebesgue measure on $\C^{n}$
  (or $\R^{2n}$).
 Equivalently, the function
 $\psi =\sqrt 2 \, h$ has density
\begin{equation}  \label{cdensity}
    \frac{1}{(2 \pi)^{n } \,  {\det G'}}\, 
             \exp\left(-\frac12\overline 
             \psi \cdot  (G')^{-1} \psi\right) = \frac{1}{(2 \pi)^{n } \,  {\det G'}}\, 
             \exp\left(-\frac12  \left|K\psi\right|^2
               \right)
              , 
              \end{equation}  
              It satisfies the
              covariance relations
\begin{equation}  \label{feb24.5}
\E \left[\overline h(x) \, h(y) \right]
 = \frac 12 \,  \E \left[\overline{\psi (x) } \, \psi(y) \right]
    = G'(x,y) ,
    \end{equation}
    \[  \E \left[ h(x) \, h(y) \right] =0.\]
The complex Gaussian free field on  a set of $n$
elements can be considered
as a  real field on $2n$ elements by viewing the
real and imaginary parts as separate components.
The next proposition makes this precise.
   Let $A^* = \{x^*: x \in A\}$ be another copy of $A$ and $\overline A
= A \cup A^*$. We can view $\overline A$ as a ``covering
space'' of $A$ and let $\Phi:\overline A \rightarrow A$
be the covering map, that is, $\Phi(x) = \Phi(x^*) = x$.
We call $A$ and $A^*$ the two ``sheets'' in $\overline A$.
Let $G'= G_R + i G_I$ and define $G$ on $\overline{A}$
by
\[    G(x,y) = G(x^*,y^*) = G_R(x,y) , \]
\[   G(x,y^*) =-G(x^*,y) =-G_I(x,y) . \]
Note that $G$ is a real, symmetric, positive
definite matrix.

\begin{proposition}  \label{water}
  Suppose $G' = G_R + i G_I$
is a positive definite Hermitian matrix indexed
by $A$ and  suppose  $G$  is the positive definite,
symmetric matrix indexed by $\overline A$, 
      \[ G= \bordermatrix {~ & A & A^* \cr
           A & G_R & - G_I \cr
           A^* & G_I & G_R\cr} . \]
Let $\{\phi_z : z \in \overline A\}$ be a centered Gaussian free field on $\overline A$
with covariance matrix $G$.  If
\begin{equation}  \label{cfield1}
\psi_x = \phi_x + i \, \phi_{x^*} , 
\end{equation}
then $\{\psi_x: x \in A\}$
is a complex centered Gaussian free field with covariance
matrix $2G'$.
\end{proposition}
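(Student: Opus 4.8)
The plan is to recognize $G$ as the \emph{realification} of $G'$ and to push the density of the real free field $\phi$ through the corresponding linear isomorphism. I would first record the elementary linear-algebra dictionary: identifying $\R^{\overline A}=\R^A\oplus\R^{A^*}$ with $\C^A$ via $(u;v)\mapsto u+iv$, multiplication by a matrix $M=M_R+iM_I$ on $\C^A$ becomes multiplication by the real $2n\times 2n$ matrix $\left(\begin{smallmatrix}M_R&-M_I\\ M_I&M_R\end{smallmatrix}\right)$ on $\R^{\overline A}$. Taking $M=G'$ produces exactly the block matrix $G$ of the statement, and taking $M=(G')^{-1}$ shows that $G^{-1}$ is the realification of $(G')^{-1}$. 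Moreover, under this same identification the real vector $(\phi_z)_{z\in\overline A}$ corresponds precisely to $(\psi_x)_{x\in A}$ with $\psi_x=\phi_x+i\phi_{x^*}$, and Lebesgue measure on $\R^{\overline A}$ goes to Lebesgue measure on $\C^A$.

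Next I would check the two algebraic facts that make the change of variables work. (i) For Hermitian $N=N_R+iN_I$ (so $N_R$ symmetric, $N_I$ antisymmetric) and $w=(u;v)\leftrightarrow z=u+iv$, a short computation using the antisymmetry of $N_I$ gives $w\cdot\tilde N w=u\cdot N_R u+v\cdot N_R v+2\,v\cdot N_I u=\Re(\overline z\cdot N z)=\overline z\cdot N z$, where $\tilde N$ denotes the realification of $N$; applied with $N=(G')^{-1}$ this reads $\phi\cdot G^{-1}\phi=\overline\psi\cdot(G')^{-1}\psi$. (ii) $\det G=(\det G')^2$, the standard determinant identity for realifications, together with $\det G'>0$ since $G'$ is Hermitian and positive definite. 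The same computation in (i) with $N=G'$ also re-proves that $G$ is real symmetric and positive definite, since $w\cdot G w=\Re(\overline z\cdot G'z)>0$, consistent with the remark preceding the proposition, so the real free field $\phi$ on $\overline A$ is indeed well defined.

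Then the conclusion is essentially bookkeeping: the density of $\phi$ is $(2\pi)^{-n}(\det G)^{-1/2}\exp(-\tfrac12\phi\cdot G^{-1}\phi)$ against Lebesgue measure on $\R^{\overline A}$, and transporting this through the isomorphism of the first paragraph and inserting (i) and (ii), I find that $\psi=(\psi_x)_{x\in A}$ has density $\bigl((2\pi)^n\det G'\bigr)^{-1}\exp(-\tfrac12\overline\psi\cdot(G')^{-1}\psi)$ on $\C^A$. This is exactly \eqref{cdensity}, i.e.\ $\psi$ has the law of $\sqrt2\,h$ for $h$ a complex Gaussian free field with covariance matrix $G'$; since $\det(2G')=2^n\det G'$ and $(2G')^{-1}=\tfrac12(G')^{-1}$, this density is identical with the one defining the complex Gaussian free field of covariance matrix $2G'$, which is the assertion.

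The main obstacle will be getting the realification conventions exactly right — in particular making sure the signs in the block form of $G$ realize multiplication by $G'$ and not by $\overline{G'}$ — and carefully tracking the factors of $2$ and $\pi$ between the two normalizations of a complex Gaussian density. As a cross-check I would also carry out the moment computation: each $\psi_x$ is a complex-linear combination of the jointly Gaussian reals $\phi_z$, so $\psi$ is a centered complex Gaussian vector, whose law is determined by $\E[\psi_x\psi_y]$ and $\E[\psi_x\overline{\psi_y}]$. Here $\E[\psi_x\psi_y]=\bigl(G(x,y)-G(x^*,y^*)\bigr)+i\bigl(G(x,y^*)+G(x^*,y)\bigr)=0$, which is forced by the diagonal blocks of $G$ being equal and its off-diagonal blocks being opposite; this vanishing is precisely the statement that $\psi$ is a proper (circularly symmetric) complex Gaussian, hence has a density of the form \eqref{cdensity}. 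A parallel computation yields $\E[\psi_x\overline{\psi_y}]=2\,G'(x,y)$, matching \eqref{feb24.5} with $2G'$ in place of $G'$ and again identifying the covariance matrix as $2G'$.
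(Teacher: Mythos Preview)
Your proposal is correct and follows essentially the same route as the paper: both arguments identify $G$ and $G^{-1}$ as the realifications of $G'$ and $(G')^{-1}$, match the quadratic forms $\phi\cdot G^{-1}\phi=\overline\psi\cdot(G')^{-1}\psi$, use $\det G=(\det G')^2$, and conclude that the real density on $\overline A$ coincides with the complex density \eqref{cdensity}. The only cosmetic differences are that the paper carries along the Hermitian square root $K$ of $(G')^{-1}$ and proves the determinant identity via an explicit eigenvalue-doubling argument, whereas you compute the quadratic form directly and invoke the determinant fact as standard; your additional moment cross-check is a nice redundancy but not in the paper.
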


\begin{proof}  Let $K = K_R + i K_I$ be
the Hermitian positive definite square root
of $(G')^{-1}$  and write $(G')^{-1} = \Delta_R +
 i \, \Delta_I$.  The relation $K^2 = (G')^{-1}$
 implies
\[   K_R^2 - K_I^2 = \Delta_R, \;\;\;\;
   K_R \, K_I + K_I \, K_R = \Delta_I. \]
and $G'\, (G')^{-1} = I$ implies
\[    G_R \, \Delta_R - G_I \, \Delta_I 
 = I, \;\;\;\;\;  G_R \, \Delta_I + G_I \, \Delta_R = 0 .\]
Therefore,
 \[    G^{-1} =  \left[\begin{array}{cc} \Delta_R & -\Delta_I \\
      \Delta _I & \Delta_R \end{array} \right]  
      ,\]
and $J^2 = G^{-1}$ where
 \[     J = \left[\begin{array}{cc} K_R & -K_I \\
      K _I & K_R \end{array} \right] . \]
In particular, $|J\phi|^2 = |K\psi|^2$. 
   Moreover if $\lambda > 0$
 is an eigenvalue of $G'$ with
 eigenvector $ \x + i \y$, then
 \[        G_R \, \x - G_I \y = \lambda \, \x,\;\;\;\;
    G_R \, \y + G_I \, \x = \lambda \, \y,\]
   from which we see that
   \[   G \, \left[\begin{array}{c} \x \\
   \y \end{array}
    \right] = \lambda \, \left[\begin{array}{c} \x \\
   \y \end{array}\right],\;\;\;\;
     G\, \left[\begin{array}{c} -\y \\
   \x \end{array}
    \right] = \lambda \, \left[\begin{array}{c} -\y \\
   \x \end{array}\right] .\]
Since the eigenvalues of $G$ are the eigenvalues of
$G'$ with double the multiplicity,
\[    \det G = [\det G']^2.\]
Therefore, \eqref{cdensity} can be written as
\[  \frac{1}{(\sqrt{2\pi})^{2n } \,  \sqrt{\det G}}\, 
             \exp\left(-\frac12  \left|J\phi\right|^2
               \right),\]
which is the density for the centered real field on $\overline A$
with covariance matrix $G$.
\end{proof}

We will discuss the analogue to theorem \ref{gffLoop}
for complex Hermitian weights.  We can either
use the complex
weights $ Q' = I - (G')^{-1} = Q_R + i \, Q_I $ 
on $A$ to give a representation of $\{|\psi(x)|^2:
 x \in A\}$ or we can use the   weights on $\overline A$ given by
\begin{equation}  \label{feb26.1}
   Q  = \left[
\begin{array}{cc}   Q_R & -Q_I \\
 Q_I & Q_R \end{array} \right]= I -G^{-1}, 
 \end{equation}
 to give a representation of $\{|\phi(z)|^2: z \in \overline A\}$.
 The latter contains more information so we will do this.
 Note that $Q$
  is a positive definite
symmetric matrix, but may not be acceptable even if $Q'$ is.
 
  Provided that $Q'$ and $Q$ are acceptable, let $ \cm , m$ denote the loop measures
derived from them respectively. As before,
let $\loopset$ denote the set of (rooted)
loops of positive length in $A$.  Let $\overline \loopset$ be the
set of such  loops in $\overline A$. Note that $ \cm$
is a complex measure on $\loopset$ and $m$ is a
real  measure
on $\overline \loopset$.  We write
\[    \cm (\omega) =  \cm_R(\omega) + i \,   \cm_I(\omega).\]
Recall that $\Phi:\overline A \rightarrow A$ is the
covering map.  We also write 
  $\Phi: \overline \loopset \rightarrow \loopset$
for the projection, that is, 
 if $\omega'=[\omega_0',
\ldots,\omega_k']\in \overline \loopset$ then 
$\Phi(\omega') $ 
is the loop of length $k$  whose $j$th component is
$  \Phi(\omega_j')$.  We define the pushforward
measure $\Phi_* m$ on $\loopset$ by
\[   \Phi_* m(\omega) = m\left[\Phi^{-1}(\omega)\right]
 = \sum_{\Phi(\omega') = \omega}
     m(\omega'). \]
     
 \begin{proposition} \label{straw}
 \[     \Phi_* m(\omega)  = 2 \, \cm_R(\omega)
   = \cm (\omega) +  \cm (\omega^R). \]
  \end{proposition}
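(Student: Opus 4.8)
The plan is to reduce the identity to a finite computation over the two-point fibers of the covering map $\Phi$. First I would record that $\Phi$ preserves length: if $\Phi(\omega') = \omega$ with $|\omega| = k$, then $|\omega'| = k$, so
\[
  \Phi_* m(\omega) \;=\; \sum_{\Phi(\omega')=\omega} \frac{Q(\omega')}{|\omega'|}
  \;=\; \frac{1}{k}\sum_{\Phi(\omega')=\omega} Q(\omega'),
\]
and since $Q'(\omega) = k\,\cm(\omega)$ it suffices to prove $\sum_{\Phi(\omega')=\omega} Q(\omega') = 2\,\Re Q'(\omega)$. The fiber $\Phi^{-1}(\omega)$ is finite, so there is no convergence issue.

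Next I would parametrize the lifts. Writing $\omega = [\omega_0,\dots,\omega_k]$ with $\omega_0=\omega_k$, a loop $\omega'$ with $\Phi(\omega')=\omega$ amounts to a choice of sheet $\epsilon_j \in \{1,2\}$ (with $1$ labelling $A$ and $2$ labelling $A^*$) for each $j = 0,\dots,k-1$; the rooted-loop condition $\omega_0'=\omega_k'$ forces $\epsilon_k=\epsilon_0$. For the $j$th edge of $\omega$ I would introduce the $2\times 2$ matrix $T_j$ whose entry $(T_j)_{a,b}$ is the $Q$-weight of the step from the $a$-sheet copy of $\omega_{j-1}$ to the $b$-sheet copy of $\omega_j$; by the block definition \eqref{feb26.1} of $Q$ on $\overline A$,
\[
  T_j \;=\; \begin{pmatrix} Q_R(\omega_{j-1},\omega_j) & -Q_I(\omega_{j-1},\omega_j) \\ Q_I(\omega_{j-1},\omega_j) & Q_R(\omega_{j-1},\omega_j) \end{pmatrix}
  \;=\; \iota\bigl(Q'(\omega_{j-1},\omega_j)\bigr),
\]
where $\iota$ is the ring homomorphism $\C \to M_2(\R)$ sending $z$ to $\left(\begin{smallmatrix}\Re z & -\Im z\\ \Im z & \Re z\end{smallmatrix}\right)$. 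Summing over all lifts subject to $\epsilon_k=\epsilon_0$ then telescopes into a trace:
\[
  \sum_{\Phi(\omega')=\omega} Q(\omega') \;=\; \sum_{\epsilon_0,\dots,\epsilon_{k-1}} \;\prod_{j=1}^{k} (T_j)_{\epsilon_{j-1},\epsilon_j} \;=\; {\rm tr}\bigl(T_1 T_2 \cdots T_k\bigr).
\]

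To finish, I would invoke multiplicativity of $\iota$, so that $T_1\cdots T_k = \iota\bigl(\prod_{j=1}^k Q'(\omega_{j-1},\omega_j)\bigr) = \iota(Q'(\omega))$, together with ${\rm tr}\,\iota(z) = 2\Re z$; this gives $\sum_{\Phi(\omega')=\omega} Q(\omega') = 2\Re Q'(\omega) = 2k\,\cm_R(\omega)$, hence $\Phi_* m(\omega) = 2\,\cm_R(\omega)$. The second equality of the proposition is then immediate: $Q' = I - (G')^{-1}$ is Hermitian because $G'$ is, so $Q'(\omega^R) = \overline{Q'(\omega)}$ and $\cm(\omega^R) = \overline{\cm(\omega)}$, whence $2\,\cm_R(\omega) = \cm(\omega) + \cm(\omega^R)$. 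The only delicate point is the bookkeeping: one must match the sheet labels to the block structure of $Q$ in \eqref{feb26.1} correctly, and keep track of the constraint $\epsilon_0 = \epsilon_k$ coming from the fact that $\omega'$ is a rooted \emph{loop} — it is precisely this constraint that makes the product of fiber matrices close up into a trace rather than into a sum over all of its entries.
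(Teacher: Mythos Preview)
Your argument is correct. Both you and the paper reduce immediately to the edge-weight identity $\sum_{\Phi(\omega')=\omega} Q(\omega') = 2\,\Re Q'(\omega)$ and then parametrize the $2^k$ lifts of $\omega$ by their sheet data, but the core computations diverge from there. The paper expands $Q'(\omega) = \prod_j [Q_R(\omega_{j-1},\omega_j) + i\,Q_I(\omega_{j-1},\omega_j)]$ into $2^k$ monomials indexed by $\pi \in \{R,I\}^k$, restricts to those with an even number of $I$'s to pick out the real part, and then matches each such monomial by hand with a pair of lifts, tracking the signs via the observation that in a loop exactly half of the sheet-changes go $A\to A^*$ (weight $-Q_I$) and half go $A^*\to A$ (weight $+Q_I$), so the resulting sign $(-1)^{d(\pi)/2}$ agrees with $i^{d(\pi)}$. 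Your route packages all of this sign bookkeeping into the ring embedding $\iota:\C\to M_2(\R)$: the sum over lifts becomes $\mathrm{tr}(T_1\cdots T_k)$, multiplicativity of $\iota$ collapses the product, and the single identity $\mathrm{tr}\,\iota(z)=2\,\Re z$ finishes. Your approach is shorter and more structural, and it makes transparent \emph{why} the factor of $2$ and the real part appear simultaneously; the paper's approach is more elementary in that it needs no algebraic machinery beyond expanding a product, at the cost of an explicit parity-and-sign check.
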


\begin{proof}
Let $\sequences_k = \{R,I\}^k$ and if $\pi =(\pi^1,\ldots,
\pi^k) \in \sequences_k$ we write $d(\pi)$ for the number
of components that equal $I$. Let $\sequences_k^e$ denote
the set of sequences $\pi \in \sequences_k$
with $d(\pi)$ even.

 Suppose $\omega  =[\omega_0,\ldots,\omega_k]
  \in\loopset$. There
are $2^k$ loops $\omega' =[\omega_0',\omega_1',
\ldots, \omega_k'] \in \overline \loopset$    such that  
$\Phi(\omega') = \omega$.
We can write each  such loop   as an ordered triple
$(\omega, \theta, \pi)$.  Here  
$\theta \in \{0,*\}$ and $\pi \in \sequences_k^e$. 
  We obtain $\omega'$ from $(\omega, \theta, \pi)$ as follows.
If $\theta = 0$  then $\omega_0' = \omega_0$, and otherwise
$\omega_0' = \omega_0^*$.  For $j \geq 1$, $\omega_j' 
\in\{\omega_j, \omega_j^*\}$.  If $\pi^j = R$, then
 $\omega_j'$  is chosen
 to be in the same sheet as $\omega_{j-1}'$.
If $\pi^j =  I$, then  $\omega_j'$
is chosen  in the opposite
sheet to $\omega_{j-1}'$.  Since $d(\pi)$ is even,
we see that $\omega_n' = \omega_0'$ so this is gives
a loop in $\overline \loopset$ with
  $\Phi (\omega') = \omega$.

By expanding the product we see that 
\[   Q'(\omega) = \prod_{j=1}^k 
  \left[Q_R(\omega_{j-1},\omega_j) + i \, Q_I(\omega_{j-1}
  , \omega_j ) \right]
   = \sum_{\pi \in \sequences_k} i^{d(\pi)}
        \prod_{j=1}^k 
         \, Q_{\pi^j} (\omega_{j-1},\omega_j) , \]
  \[   \Re \left[  Q'(\omega) \right]
    = \sum_{\pi \in \sequences_k^e} i^{d(\pi)}
        \prod_{j=1}^k 
         \, Q_{\pi^j} (\omega_{j-1},\omega_j) , \]
 Note that
\[    Q(\omega_{j-1}',\omega_j') = - Q_I(\omega_{j-1},\omega_j),
\;\;\;\; \omega_{j-1}' \in A , \;\;\; \omega_j' \in A' , \]
 \[ Q(\omega_{j-1}',\omega_j') = Q_I(\omega_{j-1},\omega_j),
\;\;\;\; \omega_{j-1}' \in A' , \;\;\; \omega_j' \in A ,\]
\[   Q(\omega_{j-1}',\omega_j') = Q_R(\omega_{j-1},\omega_j),
\;\;\;\;\mbox{ otherwise }. \]
If $d[\pi]$ is even, then $d[\pi]/2$ denotes the number
of times that the path $\omega'$ goes from $A^*$ to $A$. 
Using this we can write
\[    \Re \left[  Q'(\omega) \right]
=\frac 12 \,Q\left[\Phi^{-1}(\omega)\right] 
    =  \frac 12  \sum_{\Phi(\omega')
     = \omega}  
        Q(\omega') . \]
  The factor $1/2$ compensates
 for the initial choice of $\omega_0'$.  Since
$  Q' (\omega^R)  = \overline{  Q'(\omega)}$, we
see that
\[     Q'(\omega) +   Q'(\omega^R)
 = Q\left[\Phi^{-1}(\omega)\right] .\]
Since
\[      \cm(\omega) = \frac{  Q'(\omega)}{|\omega|}, \;\;
\;\; 
   \Phi_* \mu(\omega) = \frac{Q\left[\Phi^{-1}(\omega)\right]}{|\omega|} ,  \]
        we get the result.
         \end{proof}
Since 
\[          \det G' = \exp \left\{   \sum_{\omega \subset A}
    \cm (\omega)\right\}, \;\;\;\;
       \det G =     \exp \left\{   \sum_{\omega' \subset 
       \overline A}
   m(\omega)\right\}, \]
we get another derivation of the relation
$             \det G= [\det G']^2.$

Given the loop measure
$  \cm$ on $A$  (or  the loop
measure $m$ on $\overline A$), we can consider the discrete
occupation field at time $t$ as a measure $ \mu_{t,\cm}$
  on $\N^A$ (or $\mu_{t,m}$ on $\N^{\overline A}$,
  respectively).  The measure $\mu_{t,m}$ pushes forward to a
  measure $\Phi_* \mu_{t,m}$ on $\N^A$ by adding the
  components of $x$ and $x^*$.
 It follows from \eqref{feb28.8} and proposition
 \ref{straw} that 
   \[     \Phi_* \mu_{t,m} =   \mu_{2t, \cm} . \]
 Also  the
 ``trivial loop occupation field'' on $\overline A$ 
 at time $t$ induces an occupation field on $A$
 by addition.  This has the same distribution
 as the trivial loop occupation
 field on $A$ at time $2t$ since there are two points in
 $\overline A$ corresponding to each point in $A$.
 Hence $\Phi_* \rho_{t,m}$ has the same distribution as
 $\rho_{2t,\cm}$.

 Using theorem \ref{gffLoop} and proposition \ref{water}
 we get the following.
 
 \begin{itemize}
 \item Suppose $ Q'$ is a positive definite acceptable
 Hermitian 
 matrix indexed by $A$.
   Let $G' = (I-  Q')^{-1}$. 
 
 \item Let $Q$ be the positive definite real matrix
 on $\overline A$ as in \eqref{feb26.1}.  Let $G =  (I-Q)
 ^{-1} $. 
 
 \item Let $\{\phi(z): z \in \overline A\}$ be 
   a  centered
  Gaussian free field on $\overline A$ with covariance
 matrix $G$ provided $Q$ is acceptable.
 
 \item  If $h(x) = [\phi(x) + i \, \phi(x^*)]/\sqrt 2$, then
 $h$ is a complex Gaussian free field on $A$ with
 covariance matrix $ G'$. 
 
 \item   If $\rho_{t}$ denotes
 the continuous occupation field  on $\overline A$
 (including trivial loops)
 given by $Q$ at time $t$ 
 then  $\{\frac 12 \phi(z)^2:
  z \in \overline A\}$ has distribution
  $\rho_{\frac 12} .$

 \item  If $\rho_t'$ denotes
 the continuous occupation field  on  $ A$
 (including trivial loops)
 given by $Q'$ at time $t$ 
 then  $\{  |h(z)|^2:
  z \in  A\}$ has distribution
  $\rho_1' .$
 
 \end{itemize}
 
 \section{Acknowledgment}
 
  The authors would like to thank Dapeng Zhan for bringing an error in an earlier version of this paper to our attention.

\end{document}